\newtheorem{theorem}{Theorem}[section]
\newtheorem{corollary}[theorem]{Corollary}
\newtheorem{lemma}[theorem]{Lemma}
\newtheorem{definition}[theorem]{Definition}
\newtheorem{proposition}[theorem]{Proposition}
\newtheorem{remark}[theorem]{Remark}
\newtheorem{example}[theorem]{Example}
\newtheorem*{thma}{Theorem A}
\newtheorem*{thmb}{Theorem B}
\newtheorem*{thmc}{Theorem C}
\newtheorem*{thmd}{Theorem D}
\def\calC{{\mathcal C}}
\def\calD{{\mathcal D}}
\def\calE{{\mathcal E}}
\def\calJ{{\mathcal J}}
\def\calO{{\mathcal O}}
\def\calP{{\mathcal P}}
\def\calQ{{\mathcal Q}}
\def\frakM{{\mathfrak M}}
\def\frakN{{\mathfrak N}}
\def\frakF{{\mathfrak F}}
\def\frakG{{\mathfrak G}}
\def\k{{\underline{k}}}
\def\ct'{{\rm C_2'}}
\def\k4{{\rm C_2 \times C_2}}
\def\c4{{{\rm C}_4}}
\def\d8{{{\rm D}_8}}
\def\Aut{\mathop{\rm Aut}\nolimits}
\def\coh{{\mathop{\rm coh}\nolimits}}
\def\Hom{\mathop{\rm Hom}\nolimits} 
\def\Nat{\mathop{\rm Nat}\nolimits}
\def\Id{\mathop{\rm Id}\nolimits}
\def\lim{\mathop{\varinjlim}\nolimits}
\def\Ob{\mathop{\rm Ob}\nolimits} 
\def\Mor{\mathop{\rm Mor}\nolimits}
\def\PSh{\mathop{\rm PSh}\nolimits}
\def\Sh{\mathop{\rm Sh}\nolimits}
\def\cod{\mathop{\rm cod}}
\def\dom{\mathop{\rm dom}}
\DeclareMathOperator{\OG}{\mathbf{O}\it{G}}
\DeclareMathOperator{\C}{\mathbf{C}}
\DeclareMathOperator{\D}{\mathbf{D}}
\DeclareMathOperator{\G}{\mathbf{G}}
\DeclareMathOperator{\bP}{\mathbf{P}}
\DeclareMathOperator{\Mod}{\mathfrak{M}od-}
\DeclareMathOperator{\rMod}{Mod-}
\DeclareMathOperator{\rmod}{mod-}
\DeclareMathOperator{\rSet}{Set-}
\begin{document}

\title{Skew category algebras and modules on ringed finite sites}

\author{Mawei Wu}
\author{Fei Xu}
\email{19mwwu@stu.edu.cn}
\email{fxu@stu.edu.cn}
\address{Department of Mathematics\\Shantou University\\Shantou, Guangdong 515063, China}

\subjclass[2020]{}

\keywords{finite category, topos, modules on site, Grothendieck construction, skew category algebra, subcategory topology}

\thanks{The authors \begin{CJK*}{UTF8}{}
\CJKtilde \CJKfamily{gbsn}(吴马威、徐斐)
\end{CJK*} are supported by the NSFC grants No.12171297 and No.11671245}

\date{}
\maketitle

\begin{abstract}
Let $\calC$ be a small category. We investigate ringed sites $(\C,\mathfrak{R})$ on $\calC$ and the resulting module categories $\Mod\mathfrak{R}$. When $\calC$ is finite, based on Grothendieck and Verdier's classification of finite topoi, we prove that each $\Mod\mathfrak{R}$ is equivalent to $\rMod\mathfrak{R}|_{\calD}[\calD]$, where $\mathfrak{R}|_{\calD}[\calD]$ is the skew category algebra, canonically defined on $(\C,\mathfrak{R})$, for a uniquely determined full subcategory $\calD\subset\calC$ and the restriction $\mathfrak{R}|_{\calD}$ of $\mathfrak{R}$ to $\calD$.     
\end{abstract}

\tableofcontents

\section{Introduction}
Let $\calC$ be a small category. It is said to be \textit{finite} if Mor$\calC$ is finite, and it is \textit{object-finite} if $\Ob\calC$ is finite. One may put a Grothendieck topology $\calJ$ on $\calC$ to obtain a site $\C=(\calC,\calJ)$, which is said to be finite if the underlying category $\calC$ is. If $\mathfrak{R}$ is a sheaf of rings on $\C$, one can consider the \textit{right} modules on the ringed site $(\C,\mathfrak{R})$. They form an Abelian category, written as $\Mod\mathfrak{R}$, which is our major subject of investigation. In this paper, we mainly focus on finite categories, because all of their Grothendieck topologies, as well as the resulting topoi, are classified in \cite{SGA}. The classification enables us to characterize module categories on ringed finite sites.

Our motivation comes from the representation theory of finite groups and finite-dimensional algebras. This is the main reason that we want to focus on sites rather than topoi. On the one hand, in group representations, one often looks into various finite categories associated to the group in question, and consider their representations (i.e. presheaves) and cohomology. Recent progresses showed that sheaves on these categories are interesting \cite{Ba, XX}. On the other hand, representations of quivers, posets and general small categories have been under intensive investigations, in both algebra and topology, see for instance  \cite{AKO, ARS, L}. This work might shed new light on these subjects.

Suppose $k$ is a commutative ring with identity. We shall denote by $k$-Alg the category of unital associative algebras and unital $k$-algebra homomorphisms. Let $\mathfrak{R}$ be a presheaf of unital $k$-algebras on $\calC$. Motivated by the \textit{Grothendieck construction} $Gr_{\calC}\mathfrak{R}$ of $\mathfrak{R}$ on $\calC$ \cite[VI.8]{SGA1}, we introduce the \textit{skew category algebra} $\mathfrak{R}[\calC]$, as a ``linearization'' of $Gr_{\calC}\mathfrak{R}$, which includes skew group algebras and category algebras as special cases. The right modules of $\mathfrak{R}[\calC]$ form the module category ${\rm Mod}\text{-}\mathfrak{R}[\calC]$. When $\calC$ is a poset, the above construction has been studied by  Gerstenhaber and Schack \cite{GS88}, in disguise, as what they called a diagram ring $\mathfrak{R}!$.

We establish in Section 3 the following category equivalence.

\begin{thma} [Theorem \ref{skew}]
	Let $\calC$ be a small category and let $\mathfrak{R}:\calC^{op} \to k\mbox{\rm -Alg}$ be a presheaf of unital $k$-algebras on $\C$. If $\Ob \calC$ is finite, then we have a category equivalence 
	$$
	\Mod \mathfrak{R} \simeq {\rm Mod \text{-}}\mathfrak{R}[\calC].
	$$  
\end{thma}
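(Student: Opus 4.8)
The plan is to exhibit a pair of mutually inverse functors between $\Mod\mathfrak{R}$ and ${\rm Mod}\text{-}\mathfrak{R}[\calC]$ directly, the whole point of the hypothesis $\Ob\calC<\infty$ being that it makes $\mathfrak{R}[\calC]$ a \emph{unital} $k$-algebra: the elements $e_x=1_{\mathfrak{R}(x)}\cdot 1_x$, indexed by $x\in\Ob\calC$, form a finite complete set of orthogonal idempotents with $\sum_{x}e_x=1$. Every right $\mathfrak{R}[\calC]$-module $N$ then splits canonically as $N=\bigoplus_{x\in\Ob\calC}Ne_x$, and this decomposition is exactly the bookkeeping that converts a single module into an $\Ob\calC$-indexed family, i.e.\ the object part of a presheaf of modules.

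In one direction I would define $\Phi\colon\Mod\mathfrak{R}\to{\rm Mod}\text{-}\mathfrak{R}[\calC]$ by $\Phi(M)=\bigoplus_{x\in\Ob\calC}M(x)$, where $M$ assigns to each $x$ a right $\mathfrak{R}(x)$-module $M(x)$ and to each $\alpha\colon x\to y$ a restriction $M(\alpha)\colon M(y)\to M(x)$ compatible with $\mathfrak{R}(\alpha)\colon\mathfrak{R}(y)\to\mathfrak{R}(x)$ in the sense that $M(\alpha)(m\cdot s)=M(\alpha)(m)\cdot\mathfrak{R}(\alpha)(s)$. For a generator $r\cdot\alpha$ of $\mathfrak{R}[\calC]$ with $\alpha\colon x\to y$ and $r\in\mathfrak{R}(\dom\alpha)=\mathfrak{R}(x)$, I declare $m\cdot(r\cdot\alpha)=M(\alpha)(m)\cdot r\in M(x)$ for $m\in M(y)$, and $0$ on the remaining summands. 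The first thing to check is that this is a well-defined right action: the identity $\big(m\cdot(r\cdot\alpha)\big)\cdot(s\cdot\beta)=m\cdot\big((r\cdot\alpha)(s\cdot\beta)\big)$ unwinds, using the contravariant functoriality $M(\alpha\beta)=M(\beta)\circ M(\alpha)$ together with the compatibility of the $M(\beta)$, into precisely the twisted multiplication defining $\mathfrak{R}[\calC]$; and $e_x$ acts as the identity on $M(x)$, so $\sum_x e_x$ acts as the identity on $\Phi(M)$.

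In the other direction I would define $\Psi\colon{\rm Mod}\text{-}\mathfrak{R}[\calC]\to\Mod\mathfrak{R}$ by $\Psi(N)(x)=Ne_x$. The unital subalgebra $\mathfrak{R}(x)\cdot 1_x\subseteq e_x\mathfrak{R}[\calC]e_x$, isomorphic to $\mathfrak{R}(x)$, makes each $Ne_x$ a right $\mathfrak{R}(x)$-module, and for $\alpha\colon x\to y$ right multiplication by $1_{\mathfrak{R}(x)}\cdot\alpha\in e_y\mathfrak{R}[\calC]e_x$ sends $Ne_y$ into $Ne_x$, giving the restriction $\Psi(N)(\alpha)$; its compatibility with $\mathfrak{R}(\alpha)$ follows from the relation $(s\cdot 1_y)(1\cdot\alpha)=(1\cdot\alpha)(\mathfrak{R}(\alpha)(s)\cdot 1_x)$ inside $\mathfrak{R}[\calC]$. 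On morphisms both functors are transparent: a natural transformation $\phi\colon M\to M'$ with $\mathfrak{R}(x)$-linear components commuting with restrictions assembles into $\bigoplus_x\phi_x$, which is $\mathfrak{R}[\calC]$-linear precisely because of those two properties, while any $\mathfrak{R}[\calC]$-linear map automatically preserves the $e_x$-components and commutes with right multiplication by the $1\cdot\alpha$.

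It then remains to see that $\Phi$ and $\Psi$ are mutually inverse. Starting from $M$, the canonical identification $\Phi(M)e_x=M(x)$, together with a check that the recovered restrictions agree with $M(\alpha)$, gives $\Psi\Phi\cong\mathrm{Id}$; starting from $N$, the splitting $N=\bigoplus_x Ne_x$ reassembles $\Phi\Psi(N)\cong N$, and one verifies naturality of both isomorphisms. I expect the only genuine work — and the main obstacle — to be the careful matching of conventions in the two associativity/compatibility verifications: keeping straight the variance of $\mathfrak{R}$ and of $M$, the source-versus-target placement of the coefficient rings on the generators $r\cdot\alpha$, and the resulting twist in the product of $\mathfrak{R}[\calC]$, so that the semilinearity of the restrictions lines up exactly with $\mathfrak{R}[\calC]$-linearity. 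Once the idempotent calculus and this single compatibility are pinned down, everything else is formal.
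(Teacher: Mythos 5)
Your proposal is correct and is essentially the paper's own proof: your $\Phi$ and $\Psi$ are precisely the functors $\Theta\colon\mathfrak{M}\mapsto\bigoplus_{y}\mathfrak{M}(y)$ and $\Omega\colon N\mapsto\bigl(x\mapsto N\cdot 1_{\mathfrak{R}(x)}\textsf{1}_x\bigr)$ constructed there, with the same action formula, the same semilinearity check, and the same use of object-finiteness to make $\sum_x 1_{\mathfrak{R}(x)}\textsf{1}_x$ a unit. Your explicit framing via the complete set of orthogonal idempotents $e_x$ and the Peirce decomposition $N=\bigoplus_x Ne_x$ is just a cleaner packaging of the paper's verification that $\Theta\Omega\cong\mathrm{Id}$ requires $\Ob\calC$ finite.
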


One can continue to put a Grothendieck topology $\calJ$ on $\calC$. If it is the minimal topology $\calJ_{\min}$, then sheaves on $(\calC,\calJ_{\min})$ and presheaves on $\calC$ are the same. When we consider another topology $\calJ$ on $\calC$, we have to employ theories of sheaves and topoi. Under the circumstance, the sheaf category (a Grothendieck topos) $\Sh(\calC,\calJ)$ becomes a subtopos of the presheaf topos $\PSh(\calC)=\Sh(\calC,\calJ_{\min})$. Grothendieck and Verdier \cite[IV.9]{SGA} pointed out that $\calJ=\calJ^{\calD}$ is uniquely determined by a strictly full Karoubian subcategory $\calD$ of $\calC$. Moreover $\Sh(\calC,\calJ^{\calD})\simeq\PSh(\calD)$. Here being Karoubian means that the category is idempotent-complete \cite{KS} or Cauchy-complete \cite{Jo}. It allows us to deduce the following result, when $\calC$ is finite.

\begin{thmb}[Theorem \ref{shc}] 
Let $\mathbf{C}=(\calC,\mathcal{J})$ be a finite site and $\mathfrak{R}:\calC^{op} \to k\mbox{\rm -Alg}$ be a sheaf of unital $k$-algebras on $\C$. Then $\calJ=\calJ^{\calD}$ is determined by a strictly full subcategory $\calD$, and there is a category equivalence 
	$$
	\Mod\mathfrak{R}\simeq\rMod\mathfrak{R}|_{\calD}[\calD],
	$$
where $\mathfrak{R}|_{\calD}$ is the restriction of $\mathfrak{R}$ to $\calD$.  
\end{thmb}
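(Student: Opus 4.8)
The plan is to deduce Theorem B from Theorem A by transporting the entire problem along the Grothendieck--Verdier equivalence $\Sh(\calC,\calJ^{\calD})\simeq\PSh(\calD)$, so that the nontrivial topology on $\calC$ is traded for the minimal topology on the subcategory $\calD$. First I would invoke the classification of \cite[IV.9]{SGA}: since $\calC$ is finite, the given topology is of the form $\calJ=\calJ^{\calD}$ for a unique strictly full Karoubian subcategory $\calD\subseteq\calC$, and the restriction functor $i^{*}$ along the inclusion $i\colon\calD\hookrightarrow\calC$ is an equivalence of topoi $\Sh(\calC,\calJ^{\calD})\xrightarrow{\ \sim\ }\PSh(\calD)$, with quasi-inverse the right Kan extension along $i^{op}$. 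This already gives the first assertion and fixes $\calD$.

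The main step is to promote this to an equivalence of module categories. A right $\mathfrak{R}$-module on $(\C,\mathfrak{R})$ is nothing but a right module object over the internal ring $\mathfrak{R}$ in the topos $\Sh(\calC,\calJ^{\calD})$: objectwise it assigns to each $c$ a right $\mathfrak{R}(c)$-module $M(c)$, with semilinear and sheaf-compatible restriction maps, and morphisms are the $\mathfrak{R}$-linear sheaf maps. Because $i^{*}$ is computed objectwise it preserves finite limits, and it sends the sheaf of rings $\mathfrak{R}$ to the presheaf of rings $\mathfrak{R}|_{\calD}=i^{*}\mathfrak{R}$. A finite-limit-preserving equivalence carries ring objects to ring objects and module objects over a given ring to module objects over its image, compatibly with morphisms, and the same holds for the quasi-inverse; I would therefore conclude that $i^{*}$ induces an equivalence
\[
\Mod\mathfrak{R}\simeq\Mod\mathfrak{R}|_{\calD},
\]
where the right-hand side denotes the modules over the presheaf of algebras $\mathfrak{R}|_{\calD}$ on $\calD$ --- equivalently, the modules on $(\calD,\calJ_{\min})$, since every presheaf on $\calD$ is a sheaf for the minimal topology.

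Finally, $\calD$ is a full subcategory of the finite category $\calC$, so $\Ob\calD$ is finite and Theorem A (Theorem \ref{skew}) applies to the presheaf $\mathfrak{R}|_{\calD}\colon\calD^{op}\to k\mbox{\rm -Alg}$, giving $\Mod\mathfrak{R}|_{\calD}\simeq\rMod\mathfrak{R}|_{\calD}[\calD]$. Composing the two equivalences yields $\Mod\mathfrak{R}\simeq\rMod\mathfrak{R}|_{\calD}[\calD]$, as claimed.

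I expect the main obstacle to be the middle step: one must check that the Grothendieck--Verdier equivalence is genuinely compatible with the ring and module data, not merely with the underlying sheaves and presheaves. Concretely, I would verify that the unit and counit of the equivalence are $\mathfrak{R}$-linear (resp.\ $\mathfrak{R}|_{\calD}$-linear) and that transporting a module action back through the right Kan extension recovers the sheaf-module structure. The exactness of $i^{*}$ does most of the work on one side, but the behaviour of the quasi-inverse on the actions is the point that needs to be spelled out with care.
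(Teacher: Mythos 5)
Your proposal is correct and follows essentially the same route as the paper: invoke the Grothendieck--Verdier classification to write $\calJ=\calJ^{\calD}$, use the Comparison Lemma equivalence $(\mathrm{Res}_{\iota},RK_{\iota})$ between $\Sh(\calC,\calJ^{\calD})$ and $\PSh(\calD)$, promote it to an equivalence of ringed topoi (the paper takes $\Psi^{\sharp}$ to be the identity on $\mathfrak{R}|_{\calD}=\mathrm{Res}_{\iota}\mathfrak{R}$, which is exactly your compatibility-of-actions step), and finish with Theorem A applied to the object-finite category $\calD$. The only difference is presentational: where you argue via internal ring and module objects transported along a finite-limit-preserving equivalence, the paper packages the same content into the statement that an equivalence of ringed topoi induces an equivalence of module categories.
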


If, for each $x \in \Ob \calD$, $\mathfrak{R}(x)$ is Noetherian, then $\mathfrak{R}|_{\calD}[\calD]$ is finite-dimensional and the coherent $\mathfrak{R}|_{\calD}[\calD]$-modules are the finitely generated ones. 

\begin{thmc}[Corollary \ref{cmshc}]
Let $\mathbf{C}=(\calC,\mathcal{J}^{\calD})$ be a finite site and $\mathfrak{R}:\calC^{op} \to k\mbox{\rm -Alg}$ be a sheaf of $k$-algebras on $\C$. If for each $x \in \Ob \calD$, $\mathfrak{R}(x)$ is Noetherian, then there is a category equivalence 	
	$$
	\coh \text{-} \mathfrak{R} \simeq \rmod\mathfrak{R}|_{\calD}[\calD].
	$$
\end{thmc}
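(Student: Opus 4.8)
The plan is to restrict the equivalence $\Phi\colon\Mod\mathfrak{R}\xrightarrow{\ \simeq\ }\rMod A$ furnished by Theorem \ref{shc}, where $A=\mathfrak{R}|_{\calD}[\calD]$, to the full subcategories of coherent objects on each side, and to verify that $\Phi$ identifies $\coh\text{-}\mathfrak{R}$ with $\rmod A$. Since $\Phi$ is assembled from the Grothendieck--Verdier equivalence $\Sh(\calC,\calJ^{\calD})\simeq\PSh(\calD)$ followed by the equivalence of Theorem \ref{skew} applied to $\calD$, it is an exact equivalence of abelian categories; in particular it preserves every categorically defined finiteness property, most notably the property of being a Noetherian object.

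First I would dispose of the right-hand side. As $\calD$ is a full subcategory of the finite category $\calC$ it is itself finite, so $\calD$ has only finitely many morphisms, and by hypothesis each $\mathfrak{R}(x)$ with $x\in\Ob\calD$ is Noetherian. The algebra $A$ is generated, as a right module over its subring $B=\prod_{x\in\Ob\calD}\mathfrak{R}(x)=\bigoplus_{x}e_xAe_x$, by the finitely many morphisms of $\calD$; hence $A$ is module-finite over the right Noetherian ring $B$ and is therefore itself right Noetherian. As noted in the remark preceding the statement, this forces the coherent $A$-modules to coincide with the finitely generated ones, so $\rmod A$ is exactly the Serre subcategory of Noetherian objects of $\rMod A$.

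The substance of the proof is to transport finiteness across $\Phi$. Under the equivalence of Theorem \ref{skew}, the free $\mathfrak{R}|_{\calD}$-module on a representable presheaf $\underline{x}=\Hom_{\calD}(-,x)$ is carried to the finitely generated projective $A$-module $e_xA$, with $e_x$ the idempotent of $A$ coming from $1_x$. Thus an $\mathfrak{R}$-module is of finite type precisely when its image under $\Phi$ is a quotient of a finite sum $\bigoplus_i e_{x_i}A$, that is, a finitely generated $A$-module; and the remaining clause in the definition of coherence---that the kernel of any morphism between finite free modules be again of finite type---transports in the same way, since kernels are preserved by the exact functor $\Phi$. Feeding in the Noetherian hypothesis on both sides, $\Phi$ then restricts to an equivalence $\coh\text{-}\mathfrak{R}\simeq\coh A=\rmod A$, which is the assertion.

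The main obstacle is precisely this compatibility of generators: one must check that $\Phi$, and equally $\Phi^{-1}$, send the finite free modules built from $\mathfrak{R}$ to the finitely generated projectives $e_xA$, so that the site-theoretic notion of coherence (finite type together with finite presentation, phrased through local generation by sections of the structure ring $\mathfrak{R}$) matches the module-theoretic one over $A$ in both directions. Everything else is formal: once the generators are matched, the Noetherian hypothesis collapses coherence to finite generation on each side, and the corollary follows immediately from Theorem \ref{shc}.
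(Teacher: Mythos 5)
Your proposal is correct and takes essentially the same route as the paper: both rest on the equivalence $\Mod\mathfrak{R}\simeq\rMod\mathfrak{R}|_{\calD}[\calD]$ of Theorem \ref{shc}, together with the observation that the Noetherian hypothesis makes the skew category algebra (module-finite over the Noetherian ring $\prod_{x\in\Ob\calD}\mathfrak{R}(x)$) itself Noetherian, so that its coherent modules are precisely the finitely generated ones. The only divergence is one of bookkeeping: where you verify by hand that the equivalence matches the free modules on representables with the projectives $e_x\mathfrak{R}|_{\calD}[\calD]$ and hence carries coherence across, the paper simply cites the general fact that coherence is an intrinsic property preserved by equivalences of ringed topoi (Stacks Project, Sections 18.18 and 18.23).
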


The paper is organized as follows. In Section 2, we recall basics on Grothendieck topologies, topoi, ringed sites and their modules, and present some concrete examples. Then we characterize the modules on object-finite categories in Section 3. We recall Grothendieck and Verdier's classification of finite topoi in Section 4, which allows us to reduce the characterizations of general module categories to the situation in the preceding section. Afterwards, we can present the category equivalences that we have stated above. We end in Section 5 with several remarks.

\section{Prelimenaries}

In this section, we will recall the definitions of a Grothendieck topology, a sheaf, as well as the concepts of a ringed site, a ringed topos and their modules.\\
 
\noindent\textbf{Convention}: We shall use curly upper-case letters $\calC, \calD, \calE$ etc for categories (underlying sites); lower-case letters $w,x,y,z$ etc for the objects of categories; scan Serif letters $\textsf{f}, \textsf{g}, \textsf{h}, \textsf{u}, \textsf{v}$ etc for the morphisms of categories. Functors among these categories will be denoted by Greek letters. Presheaves and sheaves are written as Fraktur letters $\mathfrak{F}, \mathfrak{G},\mathfrak{M}, \mathfrak{N},\mathfrak{R}$ etc.

If $\textsf{f}$ is a morphism in $\calC$, we denote by ${\rm dom}(\textsf{f})$ and ${\rm cod}(\textsf{f})$ its domain and codomain.

In this paper, we shall let $k$ to be a commutative ring with identity, and denote by $k$-Alg the category of unital associative algebras and unital $k$-algebra homomorphisms. All algebras and algebra morphisms will be unital. All modules, either over an algebra or a ringed site, are \emph{right} modules. 

\subsection{Grothendieck topologies and sheaves}
Our main references on topos and sheaf theories are \cite{SGA, Stack, KS, MM}. Here we only recall necessary ingredients.

Let $\calC$ be a small category. We shall denote by $\PSh(\calC)$ the category of
presheaves of sets, or equivalently the contra-variant functors from $\calC$ to Set (the category of sets). Given the coefficient ring $k$, we write $\PSh(\calC,k)$ for the category of presheaves of $k$-modules (contra-variant functors from $\calC$ to Mod-$k$). If $k = \mathbb{Z}$, this is often dubbed as $\text{PSh}(\calC,\mathbb{Z}) = \text{PSh}(\calC, {\rm Ab})$. If $G$ is a group, regarded as a category with a single object, $\text{PSh}(G)$ and $\text{PSh}(G,k)$ are canonically identified  $\rSet G$ and Mod-$kG$, the categories of right $G$-sets and of right $kG$-modules, respectively. 

\subsubsection{Grothendieck topologies} One may put a Grothendieck topology $\mathcal{J}_{\calC}$ on $\calC$, via the concept of a sieve. By definition a \emph{sieve} $S$ on $x \in \text{Ob}\calC$ is simply a subfunctor of the representable functor $\text{Hom}_{\calC}(-,x)$. It can be identified with a set (still written as $S$) of morphisms with codomain $x$, satisfying the condition that if $\textsf{u} \in S$ and \textsf{uv} exists then $\textsf{uv} \in S$. For instance the \emph{maximal sieve} $\text{Hom}_{\calC}(-,x)$ is given by the set of all morphisms with codomain $x$. Meanwhile there is always an empty sieve on $x$.

\begin{definition} \label{grotop} A \emph{(Grothendick) topology} on a category $\calC$ is a function $\mathcal{J}$ which assigns to each object $x$ of $\calC$ a non-empty collection $\mathcal{J}(x)$ of sieves on $\calC$, in such a way that
	\begin{enumerate}
		\item the maximal sieve $\text{\rm Hom}_{\calC}(-,x)$ is in $\mathcal{J}(x)$;
		
		\item (stability axiom) if $S \in \mathcal{J}(x)$, then $\textsf{f}^*(S)=\{\textsf{g} ~|~ \textsf{fg} \in S\} \in \mathcal{J}(y)$ for any arrow $\textsf{f}: y \rightarrow x$;
		
		\item (transitivity axiom) if $S_1 \in \mathcal{J}(x)$, and $S_2$ is any sieve on $x$ such that $\textsf{f}^*(S_2) \in \mathcal{J}(y)$ for all $\textsf{f}: y \rightarrow x$ in $S_1$, then $S_2 \in \mathcal{J}(x)$.
	\end{enumerate}
\end{definition}

Any sieve in $\mathcal{J}(x)$ is called a \emph{covering sieve} on $x$.  A small category $\calC$
equipped with a Grothendieck topology $\mathcal{J}$ is called a \emph{site} $\textbf{C} = (\calC, \mathcal{J})$.

Here are some examples of Grothendieck topologies that we will encounter.

\begin{example}
	\begin{enumerate}
		\item (\emph{The minimal topology}) Given any category $\calC$, one can define the minimal topology $\mathcal{J}_{\rm min}$ such that $\mathcal{J}_{\rm min}(x) = \{ \text{\rm Hom}_{\calC}(-,x) \}$, for each $x \in \Ob\calC$. 
		
		\item (\emph{The dense topology})
		The dense topology can be defined for an arbitrary category $\calC$: for a sieve $S$, let $S \in \mathcal{J}_{\rm den}(x)$ if and only if for any $\textsf{f}: y \rightarrow x$ there is a morphism $\textsf{g}: z \rightarrow y$ such that $\textsf{fg} \in S$.
		
		It can be seen from the definition that, the covering sieves of a dense topology are always non-empty.
		
		\item (\emph{The maximal topology})
		Let $\calC$ be any category. To define the maximal topology $\calJ_{\rm max}$, we declare all sieves to be covering sieves.
	\end{enumerate}
\end{example}

\subsubsection{Sheaves on sites} Given a Grothendieck topology, one can consider sheaves on $\calC$. Roughly speaking, a sheaf of sets on $\calC$ is a presheaf of sets on $\calC$, satisfying
certain gluing properties mandated by $\mathcal{J}$. A concise definition
of a sheaf is the following.

\begin{definition} \label{sheaf}
A presheaf $\mathfrak{F} \in \PSh(\calC)$ is a ($\mathcal{J}$-)\emph{sheaf} of sets if, for every
$x \in \Ob \calC$ and every $S \in \calJ(x)$, the inclusion $S \hookrightarrow \Hom_{\calC}(-, x)$ induces an isomorphism
$$
\Nat(\Hom_{\calC}(-, x), \mathfrak{F}) \overset{\cong} \longrightarrow \Nat(S, \mathfrak{F}).
$$
The category of sheaves of sets on $\mathbf{C} = (\calC, \mathcal{J})$ is the full subcategory of $\PSh(\calC)$, consisting of all ($\mathcal{J}$-)sheaves, denoted by $\Sh(\mathbf{C})$.
\end{definition}

Let $\frakF_{\bullet}$ be the presheaf that sends every object $x$ to a fixed singleton set $\bullet$, and every morphism to the identity. It is a terminal object of $\PSh(\calC)$, and one quickly verifies that it is always a sheaf, under any topology.

Given a presheaf, there is a natural way to build a sheaf. By standard procedures of construction (see for instance \cite[III.5]{MM} or \cite[7.10]{Stack}), given a presheaf $\mathfrak{F}$, we shall first construct the half-sheafification (terminology by P. Johnstone) $\mathfrak{F}^\dag \in \text{PSh}(\calC)$ which, for each $x \in \text{Ob}\calC$, we define 
$$
\mathfrak{F}^\dag(x)=\lim_{\calJ(x)} \text{Nat}(-, \mathfrak{F}),
$$ 
where $\calJ(x)$ is deemed as a poset by inclusion. If $\mathfrak{F}^\dag$ is not a sheaf, then we repeat the construction and $\mathfrak{F}^a = (\mathfrak{F}^\dag)^\dag \in \text{Sh}(\mathbf{C})$ is the \emph{sheafification} of $\mathfrak{F}$ or the sheaf associated to $\mathfrak{F}$. The sheafification functor is exact and is left adjoint to the forgetful functor $\text{Sh}(\mathbf{C}) \rightarrow \text{PSh}(\calC)$, which is left exact.

\begin{remark}
From the adjunction between the forgetful functor and the sheafification functor, one realizes that $\Sh(\mathbf{C})$ is a localization of $\PSh(\calC)$ \cite{SGA, Jo, KS}. In fact the morphisms that become isomorphisms after sheafification form a multiplicative system $\Xi$ in $\PSh(\calC)$. The morphisms in $\Xi$ are called \textit{local isomorphisms} \cite{KS} or \textit{morphismes bicouvrant} \cite{SGA}, giving a category equivalence
$$
\Sh(\C)\simeq\Xi^{-1}\PSh(\calC).
$$
\end{remark}

\begin{example} 
	\begin{enumerate}
		\item If $\calC$ is given the minimal topology, then all presheaves are sheaves. The local isomorphisms are exactly all the natural isomorphisms. 
		
		\item If $\calC$ is given the maximal topology, then the only sheaf is $\frakF_{\bullet}$. The local isomorphisms are all the morphisms. (Be aware of set-theoretic issues!) 
	\end{enumerate}
\end{example}

Let $k$ be the coefficient ring. A presheaf of $k$-modules is a contra-covariant functor $\frakM$ from $\calC$ to $\rMod k$. It is a sheaf of $k$-modules if the composite $\calC{\buildrel{\frakM}\over{\to}}\rMod k \hookrightarrow$Set is a sheaf of sets. The resulting sheaf category $\Sh(\C, k)$ is Abelian, while the zero object $0$(=$\frakF_{\bullet}$) maps every object to $0$. The forgetful and sheafification functors restrict to a pair of adjoint functors between $\Sh(\C, k)$ and $\PSh(\calC, k)$. It follows that
$$
\Sh(\C, k)\simeq\PSh(\calC, k)/\mathcal{S},
$$
where $\mathcal{S}$ is the Serre subcategory consisting of $\frakM\in\PSh(\calC, k)$ satisfying $\frakM^a=0$.

In the Abelian category $\Sh(\C, k)$, one may consider its coherent objects. They form an Abelian full subcategory $\coh(\C, k)$, see \cite{Po}, as well as \cite{SGA, Stack}.

\begin{example} 
On a group $G$, considered as a category with one object $\bullet$, there are only two topologies. The minimal topology(=dense topology) contains only one sieve $\text{\rm Hom}_{\calC}(-,\bullet)$, while the maximal topology has two sieves $\{\emptyset, \text{\rm Hom}_{\calC}(-,\bullet)\}$. 
	
Under the minimal topology, we have $\Sh(\G)=\rSet G$ and $\Sh(\G, k)=\rMod kG$. When $k$ is Noetherian, $\coh(\G, k)=\rmod kG$, the category of finitely generated $kG$-modules.
\end{example}

\subsection{Geometric morphisms and the Comparison Lemma}

To compare sheaves on two sites, we need the concepts of continuous functors \cite{SGA}. Let $\alpha : \calC \to \calD$ be a (covariant) functor. Then it induces a restriction along $\alpha$, $Res_{\alpha} : \PSh(\calD) \to \PSh(\calC)$, which has two adjoint functors $LK_{\alpha}, RK_{\alpha} :\PSh(\calC) \to \PSh(\calD)$, called the left and right Kan extensions along $\alpha$.

\begin{definition} Let $\C=(\calC,\calJ)$ and $\D=(\calD,\calJ')$ be two sites. A functor $\alpha: \calC\to\calD$ is called \emph{continuous} if the restriction  $Res_{\alpha} : \PSh(\calD) \to \PSh(\calC)$ preserves sheaves.
\end{definition}

When $\alpha$ is continuous, it gives rise to a functor $Res_{\alpha} : \Sh(\D) \to \Sh(\C)$.

\begin{definition} Let $\C=(\calC,\calJ)$ and $\D=(\calD,\calJ')$ be two sites. A \emph{morphism of sites} $\Lambda : \D\to\C$ is given by a continuous functor $\alpha : \calC\to\calD$ such that 
$$
LK_{\alpha}^a=(-)^a\circ LK_{\alpha} : \Sh(\C)\to\Sh(\D)
$$ is left exact.
\end{definition}

In topos theory, the notion of a topos is more important than that of a site. Although we would like to focus on sites, certain relevant key results are formulated in terms of topoi. 

\begin{definition} A \emph{(Grothendieck) topos} is a category that is equivalent to some $\Sh(\C)$, a category of sheaves of sets on a site $\C$. A \emph{(geometric) morphism of topoi} 
$$
\Psi=(\Psi^{-1},\Psi_*) : \Sh(\D)\to\Sh(\C)
$$ 
consists of a pair of functors $\Psi_* : \Sh(\D)\to\Sh(\C)$ and $\Psi^{-1}:\Sh(\C)\to\Sh(\D)$ such that $\Psi^{-1}$ is left exact and is left adjoint to $\Psi_*$.

If $\Psi_*$ happens to be an inclusion which is fully faithful, then $\Sh(\D)$ is said to be a \emph{subtopos} of $\Sh(\C)$.
\end{definition}

A morphism of sites $\Lambda : \D\to\C$ always induces a morphism of topoi $\Lambda=(\Lambda^{-1},\Lambda_*)$, where $\Lambda_*=Res_{\alpha} : \Sh(\D)\to\Sh(\C)$ and $\Lambda^{-1}=LK_{\alpha}^a : \Sh(\C)\to\Sh(\D)$. 

The Comparison Lemma will be used in Section 4, in order to establish our main results.

\begin{definition}
Let $(\calC, \calJ)$ be a site. A full subcategory $\calD$ of $\calC$ is said to be \emph{$\calJ$-dense} if for every object $x$ of $\calC$ the sieve generated by the family of arrows from objects in $\calD$ to $x$ is $\calJ$-covering.
\end{definition}

Given such a subcategory $\calD$, the Grothendieck topology $\calJ | _{\calD}$ on $\calD$ induced by $\calJ$ is defined as follows: for any sieve $S$ in $\calD$ on an object $x$, $S \in \calJ |_{\calD}(x)$ if and only if $\bar{S} \in \calJ(x)$, where $\bar{S}$ is the sieve in $\calC$ generated by the arrows in $S$.

\begin{theorem}[Comparison Lemma] \label{CL}
Let $(\calC, \calJ)$ be a site and $\calD$ be a full subcategory of $\calC$ which is $\calJ$-dense. Then the restriction $Res_{\iota}$, along $\iota : \calD \hookrightarrow \calC$, induce a functor $Res_{\iota} : \Sh(\calC,\calJ)\to \Sh(\calD,\calJ |_{\calD})$, which is an equivalence. The quasi-inverse is the right Kan extension $RK_{\iota}$ along $\iota$.
\end{theorem}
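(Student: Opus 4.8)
The plan is to establish the asserted equivalence as an adjoint equivalence coming from the presheaf-level adjunction $Res_{\iota} \dashv RK_{\iota}$, recalled in the paragraph preceding the Comparison Lemma. Write $\eta : \Id \Rightarrow RK_{\iota}Res_{\iota}$ for the unit and $\varepsilon : Res_{\iota}RK_{\iota} \Rightarrow \Id$ for the counit of this adjunction between $\PSh(\calC)$ and $\PSh(\calD)$. I would verify four statements: (i) $Res_{\iota}$ carries $\calJ$-sheaves to $\calJ|_{\calD}$-sheaves, i.e. $\iota$ is continuous; (ii) $RK_{\iota}$ carries $\calJ|_{\calD}$-sheaves to $\calJ$-sheaves; (iii) the counit $\varepsilon$ is an isomorphism; and (iv) the unit $\eta_{\mathfrak{F}}$ is an isomorphism whenever $\mathfrak{F}$ is a $\calJ$-sheaf. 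Granting these, $Res_{\iota}$ and $RK_{\iota}$ restrict to functors between $\Sh(\calC,\calJ)$ and $\Sh(\calD,\calJ|_{\calD})$ whose composites are naturally isomorphic to the identities, hence to mutually quasi-inverse equivalences, which is the assertion.

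Step (iii) is formal: because $\iota : \calD \hookrightarrow \calC$ is fully faithful, the right Kan extension along $\iota$ is a genuine extension, so $Res_{\iota}RK_{\iota}\mathfrak{G} \cong \mathfrak{G}$ for every presheaf $\mathfrak{G}$ on $\calD$ and $\varepsilon$ is already an isomorphism at the presheaf level. For step (i), given a $\calJ|_{\calD}$-covering sieve $S$ on $d \in \Ob\calD$, its generated sieve $\bar{S}$ is $\calJ$-covering by the definition of the induced topology; I would then compare matching families for $Res_{\iota}\mathfrak{F}$ along $S$ with those for $\mathfrak{F}$ along $\bar{S}$ and transport the gluing isomorphism of Definition \ref{sheaf} down to $\calD$.

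The crux is step (iv). Here I would use the pointwise formula for the right Kan extension: for $x \in \Ob\calC$,
$$
(RK_{\iota}Res_{\iota}\mathfrak{F})(x) \;\cong\; \varprojlim_{(\iota \downarrow x)} \mathfrak{F}(\iota d),
$$
the limit running over the comma category $(\iota \downarrow x)$ whose objects are the arrows $g : \iota d \to x$ with $d \in \Ob\calD$. These arrows are exactly a generating family for the sieve $S_x$ on $x$ consisting of all morphisms that factor through some object of $\calD$, and $S_x$ is $\calJ$-covering by the $\calJ$-density of $\calD$. I would then identify this limit with $\Nat(S_x,\mathfrak{F})$: a compatible family $(a_g)_{g \in (\iota \downarrow x)}$ determines a natural transformation $S_x \to \mathfrak{F}$ by setting $a_{gh} := \mathfrak{F}(h)(a_g)$ for any factorization $f = g \circ h$ of $f \in S_x$ through a $\calD$-object. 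Well-definedness is the one genuinely non-formal point: two factorizations $g_1 h_1 = g_2 h_2 = f$ of $f : w \to x$ yield sections of $\mathfrak{F}(w)$ whose restrictions along every arrow $e : \iota d \to w$ agree (since $g_i h_i e = fe$ again lies in $(\iota \downarrow x)$ and the family is compatible), and as such arrows $e$ generate a $\calJ$-covering sieve on $w$ by density, separatedness of $\mathfrak{F}$ forces the two sections to coincide. With the identification $\varprojlim_{(\iota \downarrow x)} \mathfrak{F}(\iota d) \cong \Nat(S_x,\mathfrak{F})$ in hand, the sheaf axiom applied to $S_x$ gives $\mathfrak{F}(x) \cong \Nat(\Hom_{\calC}(-,x),\mathfrak{F}) \cong \Nat(S_x,\mathfrak{F})$, and unwinding the maps shows this composite is precisely $\eta_{\mathfrak{F}}$.

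Step (ii) I would treat by the same device, checking the sheaf condition for $RK_{\iota}\mathfrak{G}$ on a $\calJ$-covering sieve of $\calC$ by rewriting its sections through the comma-category limit and reducing, via density and the separatedness argument above, to the sheaf condition for $\mathfrak{G}$ on $\calD$. I expect the well-definedness argument in step (iv) to be the main obstacle, since it is the one place where both the density hypothesis (at the domains $w$, not merely at $x$) and the separatedness of $\mathfrak{F}$ are genuinely invoked; the remaining verifications are bookkeeping with matching families.
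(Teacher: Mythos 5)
Your proposal cannot be matched against an internal argument, because the paper never proves the Comparison Lemma: it is stated (Theorem 2.11) without proof, imported from the literature (\cite{SGA}, \cite{Jo}, \cite{KS}), and then used as a black box in Proposition 4.4 and Theorem 4.5. What you have written is, in outline, the standard proof from those references, and it is correct: the decomposition into (i)--(iv) around the presheaf adjunction $Res_{\iota}\dashv RK_{\iota}$, the identification $(RK_{\iota}Res_{\iota}\mathfrak{F})(x)\cong\Nat(S_x,\mathfrak{F})$ with $S_x$ the sieve generated by the arrows from objects of $\calD$ to $x$, and the well-definedness argument combining density at the domain $w$ with separatedness of $\mathfrak{F}$ are exactly the required ingredients, and you correctly isolate the last of these as the crux. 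Two points should be made explicit if you write this up. First, fullness of $\calD$ is used silently but essentially: to invoke compatibility of the family at the arrow $g_ih_ie$ you need $h_ie:\iota d\to\iota d_i$ to be a morphism of the comma category $(\iota\downarrow x)$, i.e.\ to come from a morphism of $\calD$; this is precisely fullness, and without it both the argument and the lemma fail. Second, step (ii) is not merely ``the same device'': to verify gluing for $RK_{\iota}\mathfrak{G}$ over a $\calJ$-cover $T$ of $x$, you must pull $T$ back along each $g:\iota d\to x$, prove that its trace on $\calD$ is $\calJ|_{\calD}$-covering --- this uses the transitivity axiom of Definition 2.1 together with density and fullness --- and then apply the full gluing property of $\mathfrak{G}$, not merely its separatedness, to build the extension. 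With these clarifications your outline completes to a correct proof, and it has the modest merit of making self-contained a statement the paper only cites.
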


It is illuminating to have a concrete yet more sophiscated example other than groups. 

\begin{example} Let $\calP$ be the category $x {\buildrel{\textsf{f}}\over{\to}} y {\buildrel{\textsf{g}}\over{\to}} z$, which is essentially a poset. Since it is finite and Karoubian, there are exactly eight Grothendieck topologies on $\calP$, corresponding to the eight subposets (full subcategories, including the empty one) of $\calP$ (See Propositions 4.1 and 4.4). Since the subposets are determined by the subsets of $\calP$, we shall write for instance $xy$ for the subposet $x \to y$, for brevity. In fact, the poset of subposets of $\calP$ is opposite isomorphic to that of topologies on $\calP$.
\[
\xymatrix{& \calJ^{\emptyset} & \\
\calJ^x \ar[ur] & \calJ^y \ar[u] & \calJ^z \ar[ul] \\
\calJ^{xy} \ar[u] \ar[ur] & \calJ^{xz} \ar[ur] \ar[ul] & \calJ^{yz} \ar[ul] \ar[u]\\
& \calJ^{xyz} \ar[ur] \ar[ul] \ar[u] &}
\]
Here $\calJ^{\emptyset}=\calJ_{\max}$, $\calJ^{xyz}=\calJ^{\calP}=\calJ_{\min}$, and $\calJ^{x}=\calJ_{\rm den}$. As to the rest, because there exists the minimal covering sieve on each object (for any given topology), we only need to list them since all sieves larger than the minimal ones must be covering. The minimal covering sieve of $\calJ^{\calQ}(w)$, for an object $w\in\Ob\calP$ and a subposet $\calQ$, is formed by all morphisms originated from an object of $\calQ$ and ending at $w$.

\begin{table}[htbp]
\caption{The MINIMAL covering sieve on each object}	
\centering
\newcolumntype{Z}{>{\centering\arraybackslash}X}	
\begin{tabularx}{\textwidth}{|c|Z|Z|Z|}
\hline
     & x & y & z  \\
\hline    
$\calJ^{xyz}$ & $\Hom_{\calP}(-,x)$ & $\Hom_{\calP}(-,y)$ & $\Hom_{\calP}(-,z)$  \\
\hline    
$\calJ^x$ & $\Hom_{\calP}(-,x)$ & $\{\textsf{f}\}$ & $\{\textsf{g}\textsf{f}\}$  \\
\hline		
$\calJ^y$ & $\emptyset$ & $\Hom_{\calP}(-,y)$ & $\{\textsf{g},\textsf{g}\textsf{f}\}$  \\
\hline
$\calJ^z$ & $\emptyset$ & $\emptyset$ & $\Hom_{\calP}(-,z)$ \\
\hline
$\calJ^{xy}$ & $\Hom_{\calP}(-,x)$ & $\Hom_{\calP}(-,y)$ & $\{\textsf{g},\textsf{g}\textsf{f}\}$ \\
\hline	
$\calJ^{xz}$ & $\Hom_{\calP}(-,x)$ & $\{\textsf{f}\}$ & $\Hom_{\calP}(-,z)$ \\
\hline
$\calJ^{yz}$ & $\emptyset$ & $\Hom_{\calP}(-,y)$ & $\Hom_{\calP}(-,z)$ \\
\hline	
$\calJ^{\emptyset}$ & $\emptyset$ & $\emptyset$ & $\emptyset$ \\
\hline	 		
\end{tabularx}
\end{table}

Let $\calQ$ be a subposet of $\calQ'$. Then $\calJ^{\calQ'}\subset\calJ^{\calQ}$ and the identity functor $\Id_{\calP}$ induces a morphism of sites $\bP_{\calQ'}=(\calP,\calJ^{\calQ'}) \to \bP_{\calQ}=(\calP,\calJ^{\calQ})$, and consequently a morphism of topoi $\Sh(\bP_{\calQ}) \to \Sh(\bP_{\calQ'})$ which is fully faithful and makes the former a subtopos of the latter. Indeed they form a lattice of subtopoi of $\PSh(\calP)$ as follows
\[
\xymatrix{& \Sh(\bP_{\emptyset}) \ar[dr] \ar[d] \ar[dl] & \\
\Sh(\bP_x) \ar[dr] \ar[d] & \Sh(\bP_y) \ar[dr] \ar[dl] & \Sh(\bP_z) \ar[dl] \ar[d] \\
\Sh(\bP_{xy}) \ar[dr] & \Sh(\bP_{xz}) \ar[d] & \Sh(\bP_{yz}) \ar[dl]\\
& \Sh(\bP_{xyz}) &}
\]
By direct calculations via Definition 2.3, one readily deduces that $\Sh(\bP_{\calQ})\simeq\PSh(\calQ)$. For instance, $\Sh(\bP_{\emptyset})=\Sh(\bP_{\max})=\{\frakF_{\bullet}\}$, $\Sh(\bP_{xyz})=\Sh(\bP_{\calP})=\Sh(\bP_{\min})=\PSh(\calP)$, and $\Sh(\bP_{xy})$ consists of (pre)sheaves $\frakF$, satisfying $\frakF(y)=\frakF(z)$ and $\frakF(\textsf{g})=\Id$. 

As a subcategory of $\PSh(\calP)$, each sheaf category $\Sh(\bP_{\calQ})$ can be obtained by localization with respect to a  set of local isomorphisms, that is, $\Sh(\bP_{\calQ})\simeq\Xi^{-1}_{\calQ}\PSh(\calP)$. The set $\Xi_{\calQ}$ can be explicitly written out. For instance, $\Xi_{xy}$ consists of natural transformations that become isomorphisms after sheafification. It contains exactly the natural transformations of the following form
$$
\xymatrix{\frakF(x) \ar[d]_{\Id_{\frakF(x)}} & \frakF(y) \ar[l]_{\frakF(\textsf{f})} \ar[d]_{\Id_{\frakF(y)}} & \frakF(z) \ar[l]_{\frakF(\textsf{g})} \ar[d]^{\frakF(\textsf{g})}\\
\frakF(x) & \frakF(y) \ar[l]^{\frakF(\textsf{f})} & \frakF(y) \ar[l]^{\Id_{\frakF(y)}}}
$$
The upper row depicts a presheaf on $\calP$, and the lower one is its sheafification with respect to $\calJ^{xy}$, a sheaf on $\bP_{xy}$.

In the end, for the coefficient ring $k$, $\Sh(\bP_{\calQ},k)\simeq\PSh(\calQ,k)\simeq\rMod k\calQ$, where $k\calQ$ is the incidence algebra of $\calQ$. 
\end{example}

We hope the above example can be useful for the reader whose background is in the representation theory of finite-dimensional algebras, or of finite groups.

\subsection{Ringed sites, ringed topoi and their modules}

Let $\C$ be a site. We recall the definitions of a ringed site, a ringed topos, and their modules see \cite{SGA, Stack, KS}.

\begin{definition} Let $\C$ be a site.
\begin{enumerate}
\item A \emph{sheaf of $k$-algebras} on $\C$ is a presheaf $\mathfrak{R} : \calC^{op} \to k$-{\rm Alg} which becomes a sheaf of sets after composing with the canonical functor $k\text{-}{\rm Alg} \hookrightarrow {\rm Set}$.

\item A \emph{sheaf of $\mathfrak{R}$-modules} is a presheaf of right $\mathfrak{R}$-modules $\mathfrak{M}$, such that the underlying presheaf of $k$-modules $\mathfrak{M}$ is a sheaf of sets.

\item A \emph{morphism of sheaves of right $\mathfrak{R}$-modules} is a morphism of presheaves of $\mathfrak{R}$-modules.

\item The \emph{category of sheaves of $\mathfrak{R}$-modules} is denoted by $\Mod\mathfrak{R}$.
\end{enumerate}	
\end{definition}	

We will encounter both ringed sites and ringed topoi later on. From representation-theorists' perspective, we prefer working with ringed sites whenever it is possible, since they are comparable with group algebras etc.

\begin{definition}
\begin{enumerate}	
\item A \emph{ringed site} is a pair $(\C, \mathfrak{R})$ where $\C$ is a site and $\mathfrak{R}$ is a sheaf of rings on $\C$. The sheaf $\mathfrak{R}$  is called the \emph{structure sheaf} of the ringed site.

\item Let $(\C,\mathfrak{R})$ and $(\C',\mathfrak{R}')$ be two ringed sites. A \emph{morphism of ringed sites} $(\Lambda,\Lambda^{\sharp}) : (\C,\mathfrak{R})\to(\C',\mathfrak{R}')$ is given by a morphism of sites $\Lambda : \C\to\C'$ and a map of sheaves of $k$-algebras $\Lambda^\sharp : \Lambda^{-1}(\mathfrak{R}')\to\mathfrak{R}$, which by adjunction is the same thing as a map of sheaves of $k$-algebras $\Lambda^{\sharp}: \mathfrak{R}' \to \Lambda_*(\mathfrak{R})$.

\item A \emph{ringed topos} is a pair $(\Sh(\C), \mathfrak{R})$ where $\C$ is a site and $\mathfrak{R}$ is a sheaf of rings on $\C$. The sheaf $\mathfrak{R}$ is the \emph{structure sheaf} of the ringed topos.

\item Let $(\Sh(\C), \mathfrak{R})$, $(\Sh(\C'), \mathfrak{R}')$ be ringed topoi. A \emph{morphism of ringed topoi} $(\Psi,\Psi^{\sharp}): (\Sh(\C), \mathfrak{R}) \to (\Sh(\C'), \mathfrak{R}')$ is given by a morphism of topoi $\Psi=(\Psi^{-1},\Psi_*): \Sh(\C) \to \Sh(\C')$ together with a map of sheaves of $k$-algebras $\Psi^{\sharp}: \Psi^{-1} (\mathfrak{R}') \to \mathfrak{R}$, which by adjunction is the same thing as a map of sheaves of $k$-algebras $\Psi^{\sharp}: \mathfrak{R}' \to \Psi_*(\mathfrak{R})$.
\end{enumerate}		
\end{definition}

A ringed topos is just a ringed site. The difference is on their morphisms. A morphism between ringed sites always induces one between corresponding ringed topoi. For the experienced reader, it is possible to present our contents in this paper with ringed sites only. However we choose to use both ringed sites and ringed topoi in order to be consistent with references such as \cite{SGA}.

The following example may give the reader some feeling of the sheaves of modules.

\begin{example} Let $\calC$ be the following category
$$
\xymatrix{x \ar@(ur,ul)[]_{\textsf{1}_x} \ar@(dl,dr)[]_{\textsf{h}} \ar@<0.5ex>[rr]^{\textsf{f}} \ar@<-0.5ex>[rr]_{\textsf{g}} & & y \ar@(ur,ul)[]_{\textsf{1}_y} ,}
$$
such that $\textsf{h}^2=\textsf{1}_x$ and $\textsf{f}\textsf{h}=\textsf{g}$. By Propositions 4.1 and 4.4, there are four topologies.

A presheaf of $k$-algebras $\mathfrak{R}$ on $\calC$ is represented by the following picture
$$
\xymatrix{\mathfrak{R}(x) \ar@(ur,ul)[]_{\mathfrak{R}(\textsf{1}_x)} \ar@(dl,dr)[]_{\mathfrak{R}(\textsf{h})} & & \mathfrak{R}(y) \ar@<0.5ex>[ll]^{\mathfrak{R}(\textsf{f})} \ar@<-0.5ex>[ll]_{\mathfrak{R}(\textsf{g})} \ar@(ur,ul)[]_{\mathfrak{R}(\textsf{1}_y)},}
$$
which consists of two algebras and five compatible algebra homomorphisms. One can analogously draw any right $\mathfrak{R}$-module like this.

If $\calC$ is given a topology and $\mathfrak{R}$ becomes a sheaf of $k$-algebras, the module category $\Mod\mathfrak{R}$ is a subcategory of $\PSh(\calC,k)\simeq\rMod k\calC$, where $k\calC$ is the category algebra \cite{Xu}.
\end{example}

\subsection{Special categories} \label{ei} A full subcategory $\calD$ of $\calC$ is said to be {\it strictly full} if an object $x$ belongs to $\calD$ then every object isomorphic to $x$ in $\calC$ must lie in $\calD$. 

A category is \emph{EI} if all of its endomorphisms are isomorphisms \cite{L}. Groups and partially ordered sets are EI-categories. Given an EI-category $\calC$, there is a pre-order defined on Ob$\calC$, that is $x \leq y$ if and only if $\text{Hom}_{\calC}(x,y) \neq \emptyset$. Let $[x]$ be the isomorphism class of an object $x \in$ Ob$\calC$. This pre-order induces a partial order on the set Iso$\calC$ of isomorphism classes of Ob$\calC$ (specified by $[x] \leq [y]$ if and only if $\text{Hom}_{\calC}(x,y) \neq \emptyset$). For an EI-category $\calC$ and an object $x \in$ Ob$\calC$, we define a full subcategory $\calC_{\leq x} \subset \calC$, consisting of all $y \in \text{Ob}\calC$ such that $\text{Hom}_{\calC}(y,x) \neq \emptyset$. Similarly we can define several other full subcategories of $\calC$: $\calC_{<x}$, $\calC_{\geq x}$ and $\calC_{> x}$. 

\begin{definition} Let $\calC$ be an EI-category.
\begin{enumerate} 
\item We denote by $\calC_{\min}$ the full subcategory consisting of all minimal objects.

\item A (strictly) full subcategory $\calD$ is called a \emph{co-ideal}, if $x\in\Ob\calD$ implies that $\calC_{\le x}\subset\calD$.
\end{enumerate}
\end{definition}

Here is an EI-category related to a finite group.

\begin{example} Let $G$ be a finite group. The \emph{orbit category} $\calO(G)$ has objects the orbits $G/H$, where $H$ is a subgroup, with morphism sets
	$$
	{\rm Hom}_{\calO(G)}(G/H,G/K) =\{c_g:G/H\to G/K\bigm{|} g\in G\}.
	$$
Here $c_g$ is the $G$-map given by $H \mapsto gK$ (different group elements may induce the same map).
\end{example}

In practice, one often takes a certain collection of subgroups and then creates smaller orbit categories. For instance, one can focus on the $p$-orbit category $\calO_p(G)$ on $p$-subgroups, and the smaller orbit category $\calO^{\circ}_p(G)$ on non-identity $p$-subgroups of $G$. More examples of EI-categories, including the fusion systems, can be found in \cite{AKO}. To understand the representations (that is, functors) and cohomology of such categories (and their applications to group representation theory) is our motivation to examine sheaves over finite categories.

\begin{definition} A category is \emph{Karoubian} \cite[IV, Exercise 7.5]{SGA} (i.e. \emph{idempotent-complete} \cite[Exercise 2.9]{KS} or \emph{Cauchy-complete} \cite{Jo}), if every idempotent endomorphism in $\calC$ has an image.
\end{definition}

If $\calC$ is an EI-category, then the only idempotent endomorphisms are the identities. Therefore EI-categories are always Karoubian. 

\section{Modules over presheaves of algebras} \label{triv}

Under the minimal topology, all presheaves on $\calC$ are sheaves. This section only deals with presheaves of algebras and their modules. We will see that it is interesting in its own right, and moreover it is necessary for upcoming developments.

\subsection{Grothendieck construction} \label{grocon} 

Let $\calC$ be a small category and $\mathfrak{R}:\calC^{op} \to k\text{-Alg}$ be a presheaf of $k$-algebras. Since every unital ring can be regarded as a small additive category with a single object \cite{Mit}, $\mathfrak{R}$ becomes a contravariant functor from $\calC$ to ${\rm Cat}$, the category of small categories. 

There is a device, called the Grothendieck construction (see \cite[VI.8]{SGA1}), to build a fibred category over a small category $\calC$ equipped with a pseudo-functor to ${\rm Cat}$. Since strict functors are pseudo, we can examine the construction for $\mathfrak{R}$. 

\begin{definition}
	Let $\mathfrak{F}:\calC^{op} \to {\rm Cat}$ be a (strict) functor. The \emph{Grothendieck construction} $Gr_{\calC}\mathfrak{F}$ is a small category whose objects are $\{(x,c) ~|~ x \in \Ob \calC, c \in \Ob \mathfrak{F}(x)\}$, and whose morphisms are 
		$$
		{\rm Hom}_{Gr_{\calC}\mathfrak{F}}((x,c),(y, d))=\{(\textsf{f},\textsf{u}) ~|~ \textsf{f}: x \to y \in {\rm Mor}\calC, \textsf{u}: c \to \mathfrak{F}(\textsf{f})(d) \in {\rm Mor}~\mathfrak{F}(x)\}.
		$$		
The composition of morphisms is given by $(\textsf{g}, \textsf{v}) \circ (\textsf{f},\textsf{u}) = (\textsf{g}\textsf{f}, \mathfrak{F}(\textsf{f})(\textsf{v}) \circ \textsf{u})$.
\end{definition}

Since $\mathfrak{R}$ admits an ``action'' by $\calC$, our category $Gr_{\calC}\mathfrak{R}$ carries certain algebraic structures on its morphism sets. To explain, we present $Gr_{\calC}\mathfrak{R}$ as follows
\begin{enumerate}
	\item Objects: $\{(x, \bullet_x) ~|~ x \in \Ob\calC \; \mbox{and}\, \bullet_x \, \mbox{is the unique object of} \; \mathfrak{R}(x)\}$,
	
	\item Morphisms: 
	$${\rm Hom}_{Gr_{\calC}\mathfrak{R}}((x,\bullet_x),(y, \bullet_y))=\{(\textsf{f},r) ~|~ \textsf{f}: x \to y, r: \bullet_x \to \mathfrak{R}(\textsf{f})(\bullet_y), r \in \mathfrak{R}(x)\},$$
	
	\item Composition of morphisms: $(\textsf{g},s) \circ (\textsf{f},r) = (\textsf{g}\textsf{f},\mathfrak{R}(\textsf{f})(s)  r)$.
\end{enumerate}

Note that we always have $\mathfrak{R}(\textsf{f})(\bullet_y)=\bullet_x$ for any morphism $\textsf{f}: x \to y$.

\begin{proposition} Let $Gr_{\calC}\mathfrak{R}$ be the above Grothendieck construction. Suppose $\textsf{f} :x \to y$ is a morphism in $\calC$. Then
\begin{enumerate}
\item The following subset, of ${\rm Hom}_{Gr_{\calC}\mathfrak{R}}((x,\bullet_x),(y, \bullet_y))$,
$$
\Hom^\textsf{f}((x,\bullet_x),(y, \bullet_y))=\{(\textsf{f},r) ~|~ r: \bullet_x \to \mathfrak{R}(\textsf{f})(\bullet_y), r \in \mathfrak{R}(x)\}
$$
has a natural $k$-module structure.

\item The set $\Aut^{\textsf{1}_x}(x,\bullet_x)=\{(\textsf{1}_x,r) ~|~ r: \bullet_x \to \mathfrak{R}(\textsf{1}_x)(\bullet_x), r \in \mathfrak{R}(x)\}$ has a natural ring structure and is isomorphic to $\mathfrak{R}(x)$.

\item The set $\Hom^\textsf{f}((x,\bullet_x),(y, \bullet_y))$ is a rank one free right $\Aut^{\textsf{1}_x}(x,\bullet_x)$-module, with $(\textsf{f},1_{\mathfrak{R}(x)})$ as a base element.
\end{enumerate}
\end{proposition}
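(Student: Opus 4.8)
The plan is to reduce all three statements to computations inside the algebra $\mathfrak{R}(x)$, via the obvious bijection $\Hom^\textsf{f}((x,\bullet_x),(y,\bullet_y)) \to \mathfrak{R}(x)$ sending $(\textsf{f},r)$ to $r$. The single functorial fact that governs every step is that $\mathfrak{R}$, being a functor, sends $\textsf{1}_x$ to $\Id_{\mathfrak{R}(x)}$; combined with the composition rule $(\textsf{g},s)\circ(\textsf{f},r)=(\textsf{g}\textsf{f},\mathfrak{R}(\textsf{f})(s)\,r)$ recorded above, this makes each assertion routine once the correct variances are tracked.

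For (1), since the first coordinate $\textsf{f}$ is held fixed across the whole fibre, I would simply transport the $k$-module structure of $\mathfrak{R}(x)$ along the above bijection, setting $(\textsf{f},r)+(\textsf{f},r'):=(\textsf{f},r+r')$ and $\lambda(\textsf{f},r):=(\textsf{f},\lambda r)$. That this structure is natural---i.e. that pre- and post-composition in $Gr_{\calC}\mathfrak{R}$ act $k$-linearly on the $\textsf{f}$-fibres---follows from the composition formula together with the facts that each $\mathfrak{R}(\textsf{f})$ is a $k$-algebra homomorphism and that multiplication in $\mathfrak{R}(x)$ is $k$-bilinear.

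For (2), I would specialize to $\textsf{f}=\textsf{1}_x$. Then $\mathfrak{R}(\textsf{1}_x)=\Id$, so the composition formula collapses to $(\textsf{1}_x,s)\circ(\textsf{1}_x,r)=(\textsf{1}_x,sr)$; that is, composition in $Gr_{\calC}\mathfrak{R}$ on $\Aut^{\textsf{1}_x}(x,\bullet_x)$ is precisely the multiplication of $\mathfrak{R}(x)$. Adjoining the additive structure of (1), and noting that the identity morphism of $(x,\bullet_x)$ is $(\textsf{1}_x,1_{\mathfrak{R}(x)})$, the bijection of (1) becomes a unital ring (indeed $k$-algebra) isomorphism $\Aut^{\textsf{1}_x}(x,\bullet_x)\to\mathfrak{R}(x)$.

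For (3), I would describe the right $\Aut^{\textsf{1}_x}(x,\bullet_x)$-action on $\Hom^\textsf{f}((x,\bullet_x),(y,\bullet_y))$ as precomposition: for $(\textsf{1}_x,a)\in\Aut^{\textsf{1}_x}(x,\bullet_x)$,
$$
(\textsf{f},r)\circ(\textsf{1}_x,a)=(\textsf{f},\mathfrak{R}(\textsf{1}_x)(r)\,a)=(\textsf{f},ra).
$$
The right-module axioms follow from associativity of composition in $Gr_{\calC}\mathfrak{R}$ together with the ring identification of (2). Taking the proposed base element $(\textsf{f},1_{\mathfrak{R}(x)})$, one computes $(\textsf{f},1_{\mathfrak{R}(x)})\circ(\textsf{1}_x,a)=(\textsf{f},a)$, so $a\mapsto(\textsf{f},1_{\mathfrak{R}(x)})\cdot(\textsf{1}_x,a)$ is a bijection of right $\Aut^{\textsf{1}_x}(x,\bullet_x)$-modules onto $\Hom^\textsf{f}$; hence the latter is free of rank one. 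The only real subtlety---and the one point I would check most carefully---is the bookkeeping of variance: because $\mathfrak{R}$ is contravariant, it is essential that the right action arise from precomposition with endomorphisms at the source $x$, which is exactly the case in which the twist $\mathfrak{R}(\textsf{f})(s)$ in the composition law degenerates via $\mathfrak{R}(\textsf{1}_x)=\Id$ and leaves an untwisted, rank-one free module rather than a twisted one.
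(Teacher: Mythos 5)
Your proposal is correct and follows essentially the same route as the paper: both endow the fibre $\Hom^\textsf{f}$ with the $k$-module operations $(\textsf{f},r)+(\textsf{f},r')=(\textsf{f},r+r')$, $\lambda(\textsf{f},r)=(\textsf{f},\lambda r)$, identify $\Aut^{\textsf{1}_x}(x,\bullet_x)\cong\mathfrak{R}(x)$ via the multiplication $(\textsf{1}_x,s)(\textsf{1}_x,r)=(\textsf{1}_x,sr)$, and obtain rank-one freeness from the computation $(\textsf{f},1_{\mathfrak{R}(x)})\circ(\textsf{1}_x,a)=(\textsf{f},a)$. Your explicit tracking of the contravariance and of $\mathfrak{R}(\textsf{1}_x)=\Id$ is exactly the point the paper's composition rule $(\textsf{g},s)\circ(\textsf{f},r)=(\textsf{g}\textsf{f},\mathfrak{R}(\textsf{f})(s)r)$ implicitly relies on, so there is no substantive difference.
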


\begin{proof} On $\Hom^\textsf{f}((x,\bullet_x),(y, \bullet_y))$, one can define an addition by asking $(\textsf{f},r)+(\textsf{f},r')$ to be $(\textsf{f},r+r')$. For $l\in k$, one sets $l(\textsf{f},r)=(\textsf{f},lr)$ and then the operations do give a $k$-module structure. 

Specializing to $\Aut^{\textsf{1}_x}(x,\bullet_x)=\Hom^{\textsf{1}_x}((x,\bullet_x),(x, \bullet_x))$, it has an multiplicative structure on top of the $k$-module structure. In fact, we can set $(1_x,r)(1_x,r')=(1_x,rr')$ and verify that it is compatible with the $k$-module structure. It is easy to see that there exists a natural isomorphism $\Aut^{\textsf{1}_x}(x,\bullet_x)\cong\mathfrak{R}(x)$.

Finally, from $(\textsf{f},1_{\mathfrak{R}(x)})(1_x,r)=(\textsf{f},r)$, we can readily establish the third statement.
\end{proof}

Since there is no reasonable way to define a ``sum'' of $(\textsf{f},r)$ and $(\textsf{f}',r')$, the set ${\rm Hom}_{Gr_{\calC}\mathfrak{R}}((x,\bullet_x),(y, \bullet_y))$ is not an Abelian group itself. Nonetheless, motivated by the above analyse and the work of Mitchell \cite{Mit}, we introduce an algebra on the Grothendieck construction.

\subsection{Skew category algebras} \label{skewcat}
The idea is to consider the coproduct of all $k$-modules constructed in Proposition 3.2 (1). However for future applications, we shall slightly simplify the notations. 

\begin{definition}
	Let $\calC$ be a (non-empty) small category. Let $\mathfrak{R}:\calC^{op} \to k\text{-}{\rm Alg}$ be a presheaf of $k$-algebras. The \emph{skew category algebra} $\mathfrak{R}[\calC]$ on $\calC$ with respect to $\mathfrak{R}$ is a $k$-module spanned over elements of the form $r \textsf{f}$, where $\textsf{f} \in \Mor \calC$ and $r \in \mathfrak{R}(\dom(\textsf{f}))$. We define the multiplication on two base elements by the rule
	\begin{eqnarray}
		s\textsf{g} \ast r\textsf{f}=
		\begin{cases}
			(\mathfrak{R}(\textsf{f})(s)r) \textsf{g}\textsf{f},       & \text{if} ~{\rm dom}(\textsf{g})={\rm cod}(\textsf{f}); \notag \\
			0, & {\rm otherwise}.
		\end{cases}
	\end{eqnarray} 
Extending this product linearly to two arbitrary elements, $\mathfrak{R}[\calC]$ becomes an associative $k$-algebra.

The algebra $\mathfrak{R}[\calC]$ has an identity $\Sigma_{x\in\Ob\calC} 1_{\mathfrak{R}(x)}\textsf{1}_x$ if $\calC$ is object-finite.
\end{definition}

If $\calC$ is empty, the skew category algebra is understood to be the null ring. The above construction will be used to characterize the category of right $\mathfrak{R}$-modules over a presheaf $\mathfrak{R}$ of $k$-algebras.

\begin{remark} 
\begin{enumerate}
\item If $\mathfrak{R}=\underline{k}:\calC^{op}  \to k\text{-}{\rm Alg}$ is the constant presheaf such that $\underline{k}(x)=k$ for all $x \in \Ob \calC$, then the skew category algebra $\underline{k}[\calC]$ on $\calC$ with respect to $\underline{k}$ is just the category algebra $k\calC$ \cite{Xu}.
		
\item If $\calC$ be a group $G$, then $\mathfrak{R}=\mathfrak{R}(\bullet)$ ($\bullet$ being the unique object of $G$ as a category) is simply a $k$-algebra with a designated $G$-action, and the skew category algebra is just a skew group algebra. 

\item If $\calC=\calP$ is a poset, we recover the construction $\mathfrak{R}!$ of Gerstenhaber and Schack \cite{GS88}. 
\end{enumerate}
\end{remark}

We note that due to our convention the above constructions are slightly different from their common forms in the literature. For example, in (2) the group $G$ acts on $\mathfrak{R}$ on the right, while one usually sets $\mathfrak{R}$ to be a left $G$-module, see for instance \cite[III.4]{ARS}. If we consider the Grothendieck construction for covariant functors and the resulting skew category algebra, then the difference will disappear.

\subsection{Presheaves of modules}  

For a presheaf of $k$-algebras $\mathfrak{R}$ on an object-finite small category $\calC$, the following theorem characterizes the category $\Mod\mathfrak{R}$ of right $\mathfrak{R}$-modules.

\begin{theorem} \label{skew}
Let $\calC$ be a small category and $\mathfrak{R}:\calC^{op} \to k\mbox{-}{\rm Alg}$ be a presheaf of $k$-algebras on $\C$. If $\Ob \calC$ is finite, then we have the following equivalence 	
$$
\Mod\mathfrak{R} \simeq \rMod \mathfrak{R}[\calC].
$$  
\end{theorem}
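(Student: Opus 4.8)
The plan is to exploit the complete set of orthogonal idempotents of $\mathfrak{R}[\calC]$ furnished by the objects of $\calC$, exactly as in the untwisted case where $\PSh(\calC,k)\simeq\rMod k\calC$. For each $x\in\Ob\calC$ I would set $e_x=1_{\mathfrak{R}(x)}\textsf{1}_x$. Using the multiplication rule of Definition 3.4 one checks at once that the $e_x$ are orthogonal idempotents and that, because $\Ob\calC$ is finite, $\sum_x e_x$ is the identity of $\mathfrak{R}[\calC]$. A direct computation then identifies $e_x\mathfrak{R}[\calC]e_x$ with $\mathfrak{R}(x)$ (spanned by the $r\textsf{1}_x$) and, more generally, identifies $e_y\mathfrak{R}[\calC]e_x$ with the $k$-module of symbols $r\textsf{f}$ where $\textsf{f}:x\to y$ and $r\in\mathfrak{R}(x)=\mathfrak{R}(\dom\textsf{f})$; in particular $1_{\mathfrak{R}(x)}\textsf{f}=e_y(1_{\mathfrak{R}(x)}\textsf{f})e_x$.

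With these idempotents in hand I would build the two functors directly. In one direction, send a right $\mathfrak{R}[\calC]$-module $M$ to the presheaf $\Phi(M)$ with $\Phi(M)(x)=Me_x$, which is a right $e_x\mathfrak{R}[\calC]e_x=\mathfrak{R}(x)$-module, and with structure map along $\textsf{f}:x\to y$ given by right multiplication $Me_y\to Me_x$, $m\mapsto m\cdot(1_{\mathfrak{R}(x)}\textsf{f})$. Contravariant functoriality of $\Phi(M)$ follows from the identity $(1_{\mathfrak{R}(y)}\textsf{g})\ast(1_{\mathfrak{R}(x)}\textsf{f})=1_{\mathfrak{R}(x)}\textsf{g}\textsf{f}$, and the semilinearity $\Phi(M)(\textsf{f})(m\cdot a)=\Phi(M)(\textsf{f})(m)\cdot\mathfrak{R}(\textsf{f})(a)$ follows because both sides equal $m\cdot(\mathfrak{R}(\textsf{f})(a)\textsf{f})$, using $(a\textsf{1}_y)\ast(1_{\mathfrak{R}(x)}\textsf{f})=(1_{\mathfrak{R}(x)}\textsf{f})\ast(\mathfrak{R}(\textsf{f})(a)\textsf{1}_x)=\mathfrak{R}(\textsf{f})(a)\textsf{f}$. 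In the other direction, send a presheaf of $\mathfrak{R}$-modules $\mathfrak{M}$ to $\Psi(\mathfrak{M})=\bigoplus_x\mathfrak{M}(x)$ (a finite direct sum), equipped with the action $m\cdot(r\textsf{f})=\mathfrak{M}(\textsf{f})(m)\cdot r$ when $m\in\mathfrak{M}(\cod\textsf{f})$ and $m\cdot(r\textsf{f})=0$ otherwise.

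The crux, which I expect to be the main point, is to verify that $\Psi(\mathfrak{M})$ genuinely is a right $\mathfrak{R}[\calC]$-module, i.e. that $(m\cdot(s\textsf{g}))\cdot(r\textsf{f})=m\cdot((s\textsf{g})\ast(r\textsf{f}))$: unwinding both sides reduces precisely to the contravariant functoriality of $\mathfrak{M}$ together with its semilinearity $\mathfrak{M}(\textsf{f})(n\cdot s)=\mathfrak{M}(\textsf{f})(n)\cdot\mathfrak{R}(\textsf{f})(s)$, so the twist $\mathfrak{R}(\textsf{f})$ built into the multiplication of $\mathfrak{R}[\calC]$ is exactly what makes this work. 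This compatibility, and the fact that $\sum_x e_x$ acts as the identity (which is where object-finiteness is indispensable, both for $\mathfrak{R}[\calC]$ to be unital and for the action to be unital), are the only substantive checks; everything else is routine. Finally, on morphisms $\Phi$ sends an $\mathfrak{R}[\calC]$-linear map to the family of its restrictions $Me_x\to Ne_x$ (which are $\mathfrak{R}(x)$-linear and natural), and $\Psi$ sends a natural transformation $\eta$ with each $\eta_x$ being $\mathfrak{R}(x)$-linear to $\bigoplus_x\eta_x$; these are mutually inverse because the orthogonal idempotent decomposition $M=\bigoplus_x Me_x$ recovers $M$ from $\Phi(M)$ canonically, while $\Phi\Psi(\mathfrak{M})(x)=\mathfrak{M}(x)$ on the nose. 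Hence $\Phi$ and $\Psi$ are additive mutually quasi-inverse functors, yielding $\Mod\mathfrak{R}\simeq\rMod\mathfrak{R}[\calC]$.
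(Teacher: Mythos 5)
Your proposal is correct and is essentially the paper's own proof: your $\Psi$ and $\Phi$ are exactly the paper's functors $\Theta\colon\mathfrak{M}\mapsto\bigoplus_{x}\mathfrak{M}(x)$ (with the identical action formula) and $\Omega\colon M\mapsto (x\mapsto Me_x=M\cdot 1_{\mathfrak{R}(x)}\textsf{1}_x)$, and the substantive checks you single out (the module axiom for $\Psi(\mathfrak{M})$ via semilinearity of $\mathfrak{M}$, contravariant functoriality of $\Phi(M)$ via the multiplication rule, and the quasi-inverse property via $\sum_x e_x=1$, where object-finiteness enters) are the same ones carried out in the paper; the idempotent packaging is only a cosmetic difference.

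One local correction: the claimed identification $e_x\mathfrak{R}[\calC]e_x=\mathfrak{R}(x)$ is false whenever $x$ has non-identity endomorphisms --- indeed, by your own (correct) computation of $e_y\mathfrak{R}[\calC]e_x$, taking $y=x$ shows that $e_x\mathfrak{R}[\calC]e_x$ is spanned by all $r\textsf{f}$ with $\textsf{f}\in\End_{\calC}(x)$, not just $r\textsf{1}_x$. For instance, if $\calC$ is a nontrivial group $G$ with object $\bullet$, then $e_\bullet$ is the identity and $e_\bullet\mathfrak{R}[\calC]e_\bullet$ is the whole skew group algebra. What your argument actually needs is only that $r\mapsto r\textsf{1}_x$ embeds $\mathfrak{R}(x)$ as a unital subalgebra of $e_x\mathfrak{R}[\calC]e_x$ (with identity $e_x$), so that $Me_x$ is a right $\mathfrak{R}(x)$-module by restriction along this embedding; with that rewording, everything else in your proof goes through unchanged.
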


\begin{proof}
	We will prove by constructing a pair of functors, and then show that they give rise to an equivalence between the categories. Recall that the category $\Mod\mathfrak{R}$ is just the category of preasheaves of $\mathfrak{R}$-modules. 
	
	Firstly, let us define a functor $\Theta: \Mod\mathfrak{R} \to {\rm Mod \text{-}}\mathfrak{R}[\calC]$ by
	$$
	\mathfrak{M} \mapsto \bigoplus_{y \in \calC} \mathfrak{M}(y).
	$$

	Here $\bigoplus_{y \in \calC} \mathfrak{M}(y)$ is a right $\mathfrak{R}[\calC]$-module:	
	for any $m \in \mathfrak{M}(y)$ and any $r\textsf{f} \in \mathfrak{R}[\calC]$, we can define an action 	
\begin{eqnarray}
m \cdot r\textsf{f} = 
\begin{cases}
\mathfrak{M}(\textsf{f})(m) \cdot r, & \text{if}~ \cod(\textsf{f})=y; \notag \\
0, & \text{otherwise.}\\
\end{cases}
\end{eqnarray}	

	We are now at the position to verify that the operation defined above does give $\frakM$ a right $\mathfrak{R}[\calC]$-module structure. It is clear that the operation is closed. For the identity $1_{\mathfrak{R}[\calC]} \in \mathfrak{R}[\calC]$, since
	$$
	m \cdot 	1_{\mathfrak{R}(y)}\textsf{1}_y=\mathfrak{M}(\textsf{1}_y)(m)\cdot 1_{\mathfrak{R}(y)}=m,
	$$
	it follows that	
	$$
	m \cdot 1_{\mathfrak{R}[\calC]}=m \cdot \sum_y 1_{\mathfrak{R}(y)}\textsf{1}_y=m.
	$$
	It remains to show that 
	$$
	(m \cdot s\textsf{g}) \cdot r\textsf{f} = m \cdot (s\textsf{g} \ast r\textsf{f})
	$$
	for any two elements $s\textsf{g}, r\textsf{f} \in \mathfrak{R}[\calC]$. We only need to consider the case that $x \stackrel{\textsf{f}} \to y \stackrel{\textsf{g}} \to z$ are composable in $\calC$, $m \in \mathfrak{M}(z)$, $s \in \mathfrak{R}(y)$ and $r \in \mathfrak{R}(x)$.
	Then we have	
	\begin{align*}
	& (m \cdot s\textsf{g}) \cdot r\textsf{f} \\
	= & (\mathfrak{M}(\textsf{g})(m)\cdot s) \cdot r\textsf{f} \\
	= & \mathfrak{M}(\textsf{f})(\mathfrak{M}(\textsf{g})(m) \cdot s) \cdot r, 
	\end{align*}
	and
	\begin{align*}
	& m \cdot (s\textsf{g} \ast r\textsf{f}) \\
	= & m \cdot (\mathfrak{R}(\textsf{f})(s) r) \textsf{g}\textsf{f} \\
	= &\mathfrak{M}(\textsf{g}\textsf{f})(m) \cdot (\mathfrak{R}(\textsf{f})(s) r)\\ 
	= & \mathfrak{M}(\textsf{f})(\mathfrak{M}(\textsf{g})(m)) \cdot (\mathfrak{R}(\textsf{f})(s)r) \\
	= & (\mathfrak{M}(\textsf{f})(\mathfrak{M}(\textsf{g})(m)) \cdot \mathfrak{R}(\textsf{f})(s)) \cdot r. \\
	\end{align*}
	Due to the right $\mathfrak{R}$-module structure on $\mathfrak{M}$, it follows that
	$$
	\mathfrak{M}(\textsf{f})(\mathfrak{M}(\textsf{g})(m) \cdot s) = \mathfrak{M}(\textsf{f})(\mathfrak{M}(\textsf{g})(m)) \cdot \mathfrak{R}(\textsf{f})(s).
	$$
	Hence
	$$
	(m \cdot s\textsf{g}) \cdot r\textsf{f} = m \cdot (s\textsf{g} \ast r\textsf{f}).
	$$
	
With obvious map on morphisms, it is not difficult to see that $\Theta$ is a functor. 

Secondly, let us define another functor 
$$
\Omega: {\rm Mod \text{-}}\mathfrak{R}[\calC] \to \Mod\mathfrak{R}
$$ by $N \mapsto \mathfrak{N}$. The presheaf $\mathfrak{N} : \calC^{op} \to \rMod k$ is defined as follows
	$$
	\frakN(y)= N \cdot 1_{\mathfrak{R}(y)}\textsf{1}_y, \forall y\in\Ob\calC
	$$
	and
	$$ 
	\frakN(\textsf{f})= - \cdot 1_{\mathfrak{R}(x)}\textsf{f} : N \cdot 1_{\mathfrak{R}(y)}\textsf{1}_y \to N \cdot 1_{\mathfrak{R}(x)}\textsf{1}_x, \forall \textsf{f} : x \to y. 
	$$
	Our task now is to verify that $\mathfrak{N}$ is a right $\mathfrak{R}$-module. Recall that, a presheaf of $\mathfrak{R}$-module $\mathfrak{N}$ is a presheaf $\mathfrak{N}: \calC^{op} \to \rMod k$ such that, for each $x \in \Ob \calC$, $\mathfrak{N}(x)$ has a right $\mathfrak{R}(x)$ action.

	Let us first check that $\mathfrak{N}$ is a functor. For any $n\cdot 1_{\mathfrak{R}(y)}\textsf{1}_y \in N \cdot 1_{\mathfrak{R}(y)}\textsf{1}_y$, observe that
	$$(n\cdot 1_{\mathfrak{R}(y)}\textsf{1}_y)\cdot 1_{\mathfrak{R}(x)}\textsf{f} = n \cdot (1_{\mathfrak{R}(x)} \textsf{f} \ast 1_{\mathfrak{R}(x)}\textsf{1}_x)= (n \cdot 1_{\mathfrak{R}(x)} \textsf{f}) \cdot 1_{\mathfrak{R}(x)}\textsf{1}_x \in N \cdot 1_{\mathfrak{R}(x)}\textsf{1}_x,$$ 
	so $\mathfrak{N}(\textsf{f})=- \cdot 1_{\mathfrak{R}(x)} \textsf{f}$ is well defined. Obviously,
	$$\mathfrak{N}(\textsf{1}_y)=- \cdot 1_{\mathfrak{R}(y)} \textsf{1}_y={\rm Id}_{	N \cdot 1_{\mathfrak{R}(y)}\textsf{1}_y},$$
	and
	\begin{align*}
	&\mathfrak{N}(\textsf{f}) \circ \mathfrak{N}(\textsf{g}) \\
	= & - \cdot 1_{\mathfrak{R}(y)} \textsf{g} \cdot 1_{\mathfrak{R}(x)} \textsf{f} \\
	= & - \cdot ( 1_{\mathfrak{R}(y)} \textsf{g} \ast 1_{\mathfrak{R}(x)} \textsf{f}) \\
	= & - \cdot (\mathfrak{R}(\textsf{f})(1_{\mathfrak{R}(y)}) 1_{\mathfrak{R}(x)})	\textsf{g}\textsf{f} \\ 
	= & - \cdot 1_{\mathfrak{R}(x)} \textsf{g}\textsf{f} \\
	= & \mathfrak{N}(\textsf{g}\textsf{f}). \\
	\end{align*}
		
	According to the fact that $\mathfrak{R}(\textsf{f})$ is a unital algebra homomorphism, the fourth equality above holds. Thus we have checked that $\mathfrak{N}$ is a functor.
	
	The next thing to do is to check that $\mathfrak{N}$ has a right $\mathfrak{R}$-module structure. For each $y \in {\rm Ob}\calC$, there is a right action of $\mathfrak{R}(y)$ on $N \cdot 1_{\mathfrak{R}(y)}\textsf{1}_y$ as follows. For any $n\cdot 1_{\mathfrak{R}(y)}\textsf{1}_y \in N \cdot 1_{\mathfrak{R}(y)}\textsf{1}_y$ and $r \in \mathfrak{R}(y)$, $r$ acts on $n\cdot 1_{\mathfrak{R}(y)}\textsf{1}_y$ by $r\textsf{1}_y$ from the right: $(n\cdot 1_{\mathfrak{R}(y)}\textsf{1}_y) \cdot r\textsf{1}_y=(n\cdot r\textsf{1}_y)\cdot 1_{\mathfrak{R}(y)}\textsf{1}_y \in N \cdot 1_{\mathfrak{R}(y)}\textsf{1}_y$. It is straightforward to verify that this makes $\mathfrak{N}$ a right $\mathfrak{R}$-module. 
	
From the definitions of $\Theta$ and $\Omega$, it is not difficult to see that
	$$
	\Omega\Theta \cong {\rm Id}_{\Mod\mathfrak{R}},
	$$ 
	and, moreover if ${\rm Ob}\calC$ is finite, 
	$$
	\Theta\Omega \cong {\rm Id}_{{\rm Mod \text{-}}\mathfrak{R}[\calC]}.
	$$
	
	Thus we arrive at the conclusion that if $\calC$ is object-finite then
	$$
	\Mod\mathfrak{R} \simeq {\rm Mod \text{-}}\mathfrak{R}[\calC].
	$$
\end{proof}

\begin{remark} If $\calC$ is not object-finite, then we only have a fully faithful functor $\Theta : \Mod\mathfrak{R}\hookrightarrow\rMod\mathfrak{R}[\calC]$. For example, if $X$ is a topological space and $\calP_X$ is the poset of open subsets of $X$, then the classical sheaf theory can be recovered as $\Sh(\calP_X)$, by giving $\calP_X$ a canonical Grothendieck topology. In this case, $\calP_X$ is not object-finite in general.
\end{remark}

\section{Subcategory topologies and module categories} \label{subcat}

In \cite[Expos\'e IV]{SGA}, Grothendieck and Verdier outlined a classification of ``topos finis''. A topos is said to be finite, if it is equivalent to the presheaf topos $\PSh(\calC)$ over a finite category $\calC$. 

\begin{proposition}[\cite{SGA} IV, Exercise 9.1.12] Let $\calC$ be a finite category.
\begin{enumerate} 
\item Every subtopos of a finite topos is again a finite topos.

\item There is an order-reversing bijection between the poset of Grothendieck topologies on $\calC$, and that of the strictly full Karoubian subcategories of $\calC$. 

\item There is an order-reversing bijection between the poset of Grothendieck topologies on $\calC$, and that of the subtopoi of $\PSh(\calC)$.
\end{enumerate}
\end{proposition}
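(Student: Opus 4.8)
The plan is to treat the three assertions as a single package, proving the general correspondence first and then specialising to the finite situation. Statement (3) is not special to finite categories: for an arbitrary small $\calC$ the subcategories $\Sh(\calC,\calJ)\hookrightarrow\PSh(\calC)$ are exactly the left-exact reflective subcategories, i.e. the subtopoi, the reflector being sheafification. First I would recall this localization-theoretic dictionary. In one direction a topology $\calJ$ yields the subtopos $\Sh(\calC,\calJ)$; in the other a subtopos $\calE\hookrightarrow\PSh(\calC)$ with left-exact reflector $a$ yields a topology by declaring a sieve $S\hookrightarrow\Hom_\calC(-,x)$ to be covering precisely when $a$ sends this inclusion to an isomorphism. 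I would then check that these assignments are mutually inverse and \emph{order-reversing}: enlarging $\calJ$ imposes more gluing conditions and hence shrinks the sheaf subcategory. This settles (3) for any $\calC$.

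The content requiring finiteness is the identification of each subtopos with a presheaf topos on a subcategory, that is (1) together with its refinement (2). The first observation is that on any category covering sieves are closed under finite intersection, so on a finite $\calC$ each object $x$ carries a unique \emph{minimal} covering sieve $S_x$; this is the device that replaces transfinite arguments. I would then define
$$
\calD_\calJ=\{x\in\Ob\calC \mid \calJ(x)=\{\Hom_\calC(-,x)\}\},
$$
the full subcategory of objects admitting no proper covering, and check that $\calD_\calJ$ is strictly full, since covering sieves transport along isomorphisms. As $\PSh(\calC)\simeq\PSh(\mathrm{Kar}\,\calC)$, there is no loss in assuming $\calC$ itself Karoubian, which makes the notion of a Karoubian subcategory clean.

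The crux is a density lemma: $\calD_\calJ$ is $\calJ$-dense and the induced topology $\calJ|_{\calD_\calJ}$ is the minimal one. For density I would argue by induction on a rank function afforded by finiteness: if $x\notin\calD_\calJ$ then $S_x$ is proper, and refining each arrow of $S_x$ by the minimal covering sieve on its domain strictly decreases the rank, so after finitely many steps every arrow factors through a $\calD_\calJ$-object, whence by transitivity the sieve generated by $\calD_\calJ$-arrows into $x$ is covering. That $\calJ|_{\calD_\calJ}=\calJ_{\min}$ holds because an object of $\calD_\calJ$ has no proper cover and the Karoubian hypothesis prevents the generated sieve in $\calC$ from being maximal unless the sieve in $\calD_\calJ$ already was. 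Granting this, the Comparison Lemma (Theorem \ref{CL}) gives
$$
\Sh(\calC,\calJ)\simeq\Sh(\calD_\calJ,\calJ|_{\calD_\calJ})=\PSh(\calD_\calJ),
$$
which is (1) and exhibits $\calD_\calJ$ as a finite Karoubian category.

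Finally I would assemble the bijection of (2). Going back, to a strictly full Karoubian subcategory $\calD$ I assign the topology $\calJ^\calD$ whose covering sieves on each $x$ are precisely those containing the sieve generated by all arrows $d\to x$ with $d\in\calD$; finiteness guarantees this prescription satisfies the axioms of Definition \ref{grotop}. I would then verify the two round trips $\calD_{\calJ^\calD}=\calD$ and $\calJ^{\calD_\calJ}=\calJ$ and that both assignments reverse inclusions, and combine this with the dictionary of the first paragraph to read off (3) in its stated form. The main obstacle is the density lemma together with the injectivity of $\calD\mapsto\calJ^\calD$: the role of the Karoubian condition is the delicate point, since a subcategory and its idempotent completion generate the same covering sieves and hence the same topology, so it is exactly the restriction to \emph{Karoubian} subcategories that upgrades the surjection onto topologies into a bijection. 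Confirming that $\calD_\calJ$ is genuinely idempotent-complete, and that distinct Karoubian subcategories never collapse, is what deserves the most care.
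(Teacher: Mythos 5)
Your overall architecture is reasonable, and in fact more ambitious than the paper itself, which does not prove this proposition: it defers to the SGA exercises (whose argument runs through points of topoi) and, for the one genuinely hard direction, to Johnstone's Lemma \cite[C2.2 Lemma 2.2.21]{Jo}, proving directly only the consequence $\Sh(\calC,\calJ^{\calD})\simeq\PSh(\calD)$ via the Comparison Lemma (Proposition 4.4). Your treatment of (3) through the dictionary between Grothendieck topologies and left-exact reflective subcategories of $\PSh(\calC)$ is correct and standard, your reduction to Karoubian $\calC$ is the right normalization, and taking $\calD_{\calJ}$ to be the $\calJ$-irreducible objects is the right candidate subcategory. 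The genuine gap is in your proof of the density lemma, which you correctly identify as the crux.

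The rank-descent argument fails for general finite categories because of non-invertible endomorphisms and cycles. First, the refinement you describe cannot make progress: if $M_x$ is the minimal covering sieve on $x$, the sieve generated by the composites $\textsf{f}\textsf{g}$ with $\textsf{f}\in M_x$ and $\textsf{g}\in M_{\dom(\textsf{f})}$ is covering (transitivity) and contained in $M_x$, hence \emph{equals} $M_x$; the process stabilizes at the first step, and no rank has decreased. The obstruction is real: take the one-object category with morphisms $\{\textsf{1},\textsf{e}\}$, $\textsf{e}^2=\textsf{e}$, and the topology whose covering sieves are $\{\textsf{e}\}$ and the maximal sieve. Here $\calD_{\calJ}=\emptyset$, refinement only reproduces $\textsf{e}\textsf{e}=\textsf{e}$ forever, and density is \emph{false}. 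This category is not Karoubian, of course --- but your density argument never invokes the Karoubian hypothesis, so it cannot be correct, since the statement it purports to prove fails without that hypothesis. After Karoubification (objects $(x,\textsf{1})$ and $(x,\textsf{e})$) density does hold, yet still not by rank descent: the endomorphism $\textsf{e}$ of $(x,\textsf{1})$ lies in the minimal covering sieve and its domain is the object itself; what puts it into the sieve generated by $\calD_{\calJ}$-arrows is that $\textsf{e}=\iota\pi$ splits through the irreducible object $(x,\textsf{e})$. This idempotent-splitting step is the missing idea, and it is exactly where the cited proof uses Cauchy-completeness: by finiteness, a chain of arrows chosen from successive minimal covering sieves must revisit an object, producing an endomorphism inside a minimal covering sieve; some power of it is idempotent; splitting that idempotent yields the factorization through irreducible objects. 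So the Karoubian condition is not only what makes $\calD\mapsto\calJ^{\calD}$ injective (the point you flag as delicate); it is indispensable for the density lemma itself, i.e.\ for surjectivity. Note finally that your rank induction is sound for finite EI-categories, where a proper sieve can contain no isomorphism and the order on isomorphism classes is well founded --- this is essentially the argument of Lemma \ref{minsie} --- but the proposition concerns arbitrary finite categories, and that is precisely the extra difficulty.
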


\begin{proof} The proof is a combination of several exercises in \cite[Expos\'e IV]{SGA}, in Sections 7 and 9. It relies on considering the points of topoi, which are geometric morphisms from Set to the presheaf topos in our case. Since it is involved and is far from our purposes, we will reorganize the existing approaches and describe another way to partially demonstrate the facts, which can be extended to ringed finite sites and their module categories.
\end{proof}

The reader can revisit Examples 2.12 and 2.15, where every subcategory is a strictly full Karoubian subcategory, to better understand the statements. 

To establish the above statements, \cite[IV Exercise 9.1.12]{SGA} introduced a topology on $\calC$, for each strictly full subcategory $\calD$. For reference, we recode it below and give the topology a name. Suppose $x\in\Ob\calC$ and $\{x_i\to x\}_i$ is a set of morphisms in $\calC$. For any $w\in\Ob\calC$, there is a natural set map
$$
\coprod_{x_i\to x}\Hom_{\calC}(w,x_i) \to \Hom_{\calC}(w,x).
$$

\begin{definition} Let $\calC$ be a finite category and $\calD$ be a strictly full subcategory. The \emph{subcategory topology} $\calJ^{\calD}$ is given by defining $\calJ^{\calD}(x)$, $\forall x\in\Ob\calC$, to be the following
$$
\{\mbox{sieves}\ S\ \mbox{on}\ x \bigm{|} \coprod_{y\to x \in S}\Hom_{\calC}(w,y) \to \Hom_{\calC}(w,x)\ \mbox{is surjective for all}\ w \in \Ob\calD\}.
$$
\end{definition}

To verify that the above is truly a Grothendieck topology on $\calC$, one can check that the three conditions in Definition 2.1 are satisfied. This is tedious but routine.

\begin{remark} Let $\calC$ be a finite category and $\calD$ be a strictly full subcategory. For each object $x\in\Ob\calC$, the minimal covering sieve (possibly empty) in $\calJ^{\calD}(x)$ is the one generated by all morphisms from objects of $\calD$ to $x$.
\end{remark}

We are interested in the following result, stated in \cite[Expos\'e IV, Exercise 9.1.12 (e)]{SGA}, which makes it possible to classify module categories over ringed finite sites.

\begin{proposition} \cite[Expos\'e IV, Exercise 9.1.12 (e)]{SGA} Let $\calC$ be a finite category.
\begin{enumerate}
\item The subcategory topologies exhausts all possible topologies on a finite category $\calC$.

\item Let $\calD$ be a strictly full subcategory. Under the subcategory topology $\calJ^{\calD}$, there is an equivalence
$$
\Sh(\calC,\calJ^{\calD})\simeq\PSh(\calD).
$$
\end{enumerate}
\end{proposition}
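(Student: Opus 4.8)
I would prove the two assertions separately, starting with (2), which is self-contained. The plan is to write $\PSh(\calD)=\Sh(\calD,\calJ_{\min})$ and apply the Comparison Lemma (Theorem~\ref{CL}) to $\iota:\calD\hookrightarrow\calC$. First I check that $\calD$ is $\calJ^{\calD}$-dense: for each $x\in\Ob\calC$ the sieve generated by all arrows from objects of $\calD$ into $x$ is the minimal covering sieve of Remark 4.3, hence $\calJ^{\calD}$-covering, which is exactly the density condition. The Comparison Lemma then yields $\Sh(\calC,\calJ^{\calD})\simeq\Sh(\calD,\calJ^{\calD}|_{\calD})$, and the only substantive step is to identify $\calJ^{\calD}|_{\calD}$ with $\calJ_{\min}$. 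For this, take a sieve $S$ on $x\in\Ob\calD$ with $\bar S\in\calJ^{\calD}(x)$; the surjectivity in Definition 4.2, applied at $w=x$, produces a factorization $1_x=u g$ with $u\in\bar S$, and since $\bar S$ is generated by $S$ one may rewrite this as $1_x=s\sigma$ with $s\in S$ and $\sigma$ a morphism of $\calC$ between objects of $\calD$. As $\calD$ is \emph{full}, $\sigma\in\Mor\calD$, so $1_x=s\sigma\in S$ and $S$ is maximal. Hence $\calJ^{\calD}|_{\calD}=\calJ_{\min}$ and $\Sh(\calD,\calJ^{\calD}|_{\calD})=\PSh(\calD)$, proving (2).

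For (1) I would reconstruct a subcategory from a given topology $\calJ$ and check it reproduces $\calJ$. Finiteness makes each $\calJ(x)$ a finite family of sieves closed under intersection, so there is a \emph{minimal} covering sieve $M_x=\bigcap\calJ(x)\in\calJ(x)$, and $\calJ$ is recovered from it via $\calJ(x)=\{S\mid S\supseteq M_x\}$. Define $\calD_{\calJ}$ to be the full subcategory on the objects $x$ with $M_x=\Hom_{\calC}(-,x)$, equivalently $1_x\in M_x$. I would first verify that $\calD_{\calJ}$ is strictly full and closed under retracts: an isomorphism $x\cong x'$ induces a bijection $\calJ(x)\cong\calJ(x')$ preserving maximal sieves, giving strict fullness; and if $y\in\calD_{\calJ}$ while $x$ is a retract of $y$ via $s:x\to y$, $r:y\to x$ with $rs=1_x$, then for any covering $S$ on $x$ the stability axiom gives $r^{*}(S)=\Hom_{\calC}(-,y)\ni 1_y$, so $r\in S$ and thus $1_x=rs\in S$; hence $M_x=\Hom_{\calC}(-,x)$ and $x\in\calD_{\calJ}$. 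When $\calC$ is Karoubian this makes $\calD_{\calJ}$ a strictly full Karoubian subcategory.

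It then remains to prove $M_x=S^{\calD_{\calJ}}_x$, where $S^{\calD_{\calJ}}_x$ is the sieve generated by the arrows from $\calD_{\calJ}$-objects to $x$; by the description $\calJ(x)=\{S\mid S\supseteq M_x\}$ this equality is equivalent to $\calJ=\calJ^{\calD_{\calJ}}$. The inclusion $S^{\calD_{\calJ}}_x\subseteq M_x$ is immediate from stability: for $d\in\calD_{\calJ}$ and $h:d\to x$ one has $h^{*}(M_x)\supseteq M_d=\Hom_{\calC}(-,d)\ni 1_d$, so $h\in M_x$. The reverse inclusion $M_x\subseteq S^{\calD_{\calJ}}_x$ — that every arrow of the minimal covering sieve factors through $\calD_{\calJ}$ — is the crux and the step I expect to be the main obstacle. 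My plan is an induction: granting $M_z\subseteq S^{\calD_{\calJ}}_z$ for the domain $z$ of an arrow $f\in M_x$, every $g\in M_z$ makes $fg$ factor through $\calD_{\calJ}$, so $M_z\subseteq f^{*}(S^{\calD_{\calJ}}_x)$ and the latter is covering; the transitivity axiom applied with $S_1=M_x$ then forces $S^{\calD_{\calJ}}_x\in\calJ(x)$, whence $S^{\calD_{\calJ}}_x\supseteq M_x$ and equality holds. The delicate point is the well-foundedness of this induction: for $x\notin\calD_{\calJ}$ the sieve $M_x$ contains no isomorphism (an isomorphism in $M_x$ would drag $1_x$ into $M_x$), but it may contain non-invertible endomorphisms, so the domains of its arrows need not strictly decrease. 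This is exactly where finiteness and the Karoubian hypothesis become indispensable: a non-invertible endomorphism of a finite category has a non-identity idempotent power, which again lies in $M_x$ and, by idempotent-completeness, splits through a proper retract; this splitting supplies the strictly decreasing rank needed to run the induction. Making this rank precise and reconciling it with the stability and transitivity axioms is the technical heart of (1).

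Finally, I would record an alternative that bypasses the induction using Proposition 4.1. The retract computation above shows, for any strictly full Karoubian $\calD$, that $x\in\calD\Leftrightarrow M^{\calJ^{\calD}}_x=\Hom_{\calC}(-,x)$, so $\calD$ is recoverable from $\calJ^{\calD}$ and the assignment $\calD\mapsto\calJ^{\calD}$ is injective. Since Proposition 4.1(2) renders the poset of topologies and that of strictly full Karoubian subcategories finite and of equal cardinality, this injectivity upgrades to surjectivity, which is precisely the assertion that subcategory topologies exhaust all topologies.
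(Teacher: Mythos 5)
Your argument for part (2) is correct and is essentially the paper's own proof: $\calD$ is $\calJ^{\calD}$-dense, the induced topology $\calJ^{\calD}|_{\calD}$ is minimal, and the Comparison Lemma gives the equivalence. The paper compresses the middle step into the observation that $1_w\in S$ for every $S\in\calJ^{\calD}(w)$ with $w\in\Ob\calD$; your factorization-plus-fullness argument is the expanded version of exactly that. (A small shortcut: from $1_x=\textsf{u}\textsf{g}$ with $\textsf{u}\in\bar S$, the sieve property of $\bar S$ already yields $1_x\in\bar S$; fullness is only needed to descend from $\bar S$ to $S$.)

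Part (1) is where the problems lie, and not only where you flagged them. First, for calibration: the paper gives no proof of (1) at all --- it explicitly defers to SGA IV, Exercise 9.1.12 and to Johnstone's Lemma C2.2.21 --- so your attempt has to stand on its own. Your direct induction has the gap you yourself identified, and it cannot be closed at the stated level of generality, because the key claim $M_x=S^{\calD_{\calJ}}_x$ --- indeed statement (1) itself, read literally with subcategories of $\calC$ --- is \emph{false} for non-Karoubian finite categories. Take $\calC$ with a single object $x$ and $\End_{\calC}(x)=\{1_x,\textsf{e}\}$, where $\textsf{e}^2=\textsf{e}$ does not split. Then $\{\textsf{e}\}$ is a sieve, and $\calJ(x)=\{\Hom_{\calC}(-,x),\{\textsf{e}\}\}$ is a Grothendieck topology (stability and transitivity are checked directly; the empty sieve is never forced in), yet the only strictly full subcategories of $\calC$ are $\emptyset$ and $\calC$, giving $\calJ^{\emptyset}=\calJ_{\max}$ and $\calJ^{\calC}=\calJ_{\min}$. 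So this $\calJ$ is not a subcategory topology; in your notation $\calD_{\calJ}=\emptyset$ while $M_x=\{\textsf{e}\}\neq\emptyset=S^{\calD_{\calJ}}_x$. Your own mechanism detects the obstruction: $\textsf{e}$ is its own non-identity idempotent power and admits no splitting in $\calC$ along which to induct. The Karoubian hypothesis must therefore be placed on $\calC$ itself (or one must pass to the Karoubi envelope and take subcategories there), as in the references the paper cites; granted that, your idempotent-splitting rank does work --- in a finite category one-sided inverses are two-sided, so a non-identity idempotent splits through an object not isomorphic to $x$, which gives the strict decrease --- and the resulting argument is in substance Johnstone's. Your counting fallback, by contrast, is complete as stated modulo Proposition 4.1(2): the retract computation correctly proves injectivity of $\calD\mapsto\calJ^{\calD}$ on strictly full Karoubian subcategories, and finiteness upgrades this to a bijection. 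Since the paper takes Proposition 4.1 on faith, that route is legitimate in context and actually supplies more than the paper's citation does; just be aware that it inherits the same Karoubian caveat, since Proposition 4.1(2) also fails for the example above (three topologies, but only one strictly full Karoubian subcategory).
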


\begin{proof} The first part depends on a discussion on points of topoi, which we shall not dwell into. A relatively elementary proof for Karoubian categories in one direction can be found in \cite[C 2.2 Lemma 2.2.21]{Jo}. As to the second part, by Definition 2.10, the full subcategory $\calD$ equipped with $\calJ^{\calD}|_{\calD}$ is a dense subsite of $(\calC,\calJ^{\calD})$. Moreover, since we must have $1_w\in S$ for any $S\in\calJ^{\calD}(w)$, $\forall w\in\Ob\calD$, $\calJ^{\calD}|_{\calD}$ is the minimal topology on $\calD$. Therefore we have an equivalence $\Sh(\calC,\calJ^{\calD})\simeq\PSh(\calD)$, by the Comparison Lemma.
\end{proof}

The above equivalence can be extended to one between suitable ringed topoi. The later will give us a classification of module categories on ringed finite sites, together with the main result in the preceding section on presheaf topoi.

\begin{theorem} \label{shc} Let $\calC$ be a finite category and $\calD$ be a strictly full subcategory. Let $\mathfrak{R}$ be a sheaf of $k$-algebras on the site $(\calC,\calJ^{\calD})$. Then we have category equivalences
$$
\Mod\mathfrak{R}\simeq\rMod\mathfrak{R}|_{\calD}[\calD].
$$
Here $\mathfrak{R}|_{\calD}$ is the restriction of $\mathfrak{R}$ to $\calD$.
\end{theorem}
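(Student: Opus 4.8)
The plan is to factor the claimed equivalence through the presheaf category on $\calD$, so that Theorem \ref{shc} becomes a formal consequence of the Comparison Lemma (Theorem \ref{CL}) together with Theorem \ref{skew}. First I would record, as in the proof of Proposition 4.4(2), that $\calD$ equipped with the induced topology $\calJ^{\calD}|_{\calD}$ is a $\calJ^{\calD}$-dense subsite of $(\calC,\calJ^{\calD})$, and that $\calJ^{\calD}|_{\calD}=\calJ_{\min}$, whence $\Sh(\calD,\calJ^{\calD}|_{\calD})=\PSh(\calD)$. The Comparison Lemma then gives that restriction along $\iota:\calD\hookrightarrow\calC$ is an equivalence
$$
Res_{\iota}:\Sh(\calC,\calJ^{\calD})\xrightarrow{\simeq}\PSh(\calD),
$$
with quasi-inverse the right Kan extension $RK_{\iota}$. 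Since $Res_{\iota}$ and $RK_{\iota}$ are additive, the same reasoning applies with $k$-module coefficients, yielding $\Sh(\calC,\calJ^{\calD},k)\simeq\PSh(\calD,k)$.

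Second, I would lift this equivalence to modules over the structure sheaf. Both $Res_{\iota}$ and $RK_{\iota}$ preserve finite limits: restriction preserves all limits, and its quasi-inverse, being part of an equivalence, preserves them as well. Consequently each functor carries ring objects to ring objects and module objects to module objects. Thus $Res_{\iota}$ sends a sheaf of $\mathfrak{R}$-modules $\mathfrak{M}$ to the sheaf of $\mathfrak{R}|_{\calD}$-modules $\mathfrak{M}|_{\calD}$, where $\mathfrak{R}|_{\calD}=Res_{\iota}\mathfrak{R}$; conversely $RK_{\iota}$ sends an $\mathfrak{R}|_{\calD}$-module to a module over $RK_{\iota}(\mathfrak{R}|_{\calD})\cong\mathfrak{R}$, the isomorphism of sheaves of algebras holding because $\mathfrak{R}$ is already a sheaf and $RK_{\iota}Res_{\iota}\cong\Id$. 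These assignments are mutually quasi-inverse, so
$$
\Mod\mathfrak{R}\simeq\Mod\mathfrak{R}|_{\calD},
$$
the right-hand side being the category of sheaves of $\mathfrak{R}|_{\calD}$-modules on $(\calD,\calJ_{\min})$.

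Third, because $\calJ_{\min}$ turns every presheaf into a sheaf, $\Mod\mathfrak{R}|_{\calD}$ is exactly the category of presheaves of $\mathfrak{R}|_{\calD}$-modules on $\calD$. As $\calC$ is finite, $\calD$ is object-finite, so Theorem \ref{skew} applies to the presheaf of algebras $\mathfrak{R}|_{\calD}$ and gives
$$
\Mod\mathfrak{R}|_{\calD}\simeq\rMod\mathfrak{R}|_{\calD}[\calD].
$$
Composing the two equivalences establishes the theorem.

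The main obstacle is the second step: checking that the set-level (respectively $k$-linear) Comparison equivalence genuinely lifts to the module categories, that is, that $Res_{\iota}$ and $RK_{\iota}$ respect the $\mathfrak{R}$-action and identify $RK_{\iota}(\mathfrak{R}|_{\calD})$ with $\mathfrak{R}$ as sheaves of algebras. This reduces to verifying that the unit and counit of the adjunction are compatible with the multiplication and action maps, which follows formally from left-exactness of the two functors; it is nonetheless the only point where one must argue beyond a direct citation of Theorems \ref{CL} and \ref{skew}.
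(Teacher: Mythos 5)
Your proposal is correct and follows essentially the same route as the paper: the Comparison Lemma equivalence $\Sh(\calC,\calJ^{\calD})\simeq\PSh(\calD)$ of Proposition 4.4(2), lifted to the structure sheaf to obtain $\Mod\mathfrak{R}\simeq\Mod\mathfrak{R}|_{\calD}$, followed by Theorem \ref{skew} applied to the object-finite category $\calD$. The only cosmetic difference is that where you verify by hand that $Res_{\iota}$ and $RK_{\iota}$ preserve ring and module objects (via left exactness and $RK_{\iota}(\mathfrak{R}|_{\calD})\cong\mathfrak{R}$), the paper packages the same step as an equivalence of ringed topoi $(\Psi,\Psi^{\sharp})$ with $\Psi^{\sharp}$ the identity and cites the standard fact that such an equivalence induces an equivalence of module categories.
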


\begin{proof} Given the equivalence of topoi
$$
\Psi=(\Psi^{-1},\Psi_*)=(RK_{\iota},Res_{\iota}):\Sh(\calC,\calJ^{\calD}){\buildrel{\simeq}\over{\to}}\PSh(\calD),
$$
and a structure sheaf $\mathfrak{R}$ on $\Sh(\calC,\calJ^{\calD})$, we introduce a morphism of ringed topoi
$$
(\Psi,\Psi^{\sharp}):(\Sh(\calC,\calJ^{\calD}),\mathfrak{R})\to(\PSh(\calD),\mathfrak{R}'),
$$
where $\mathfrak{R}'=\Psi_*(\mathfrak{R})$ and $\Psi^{\sharp}$ is the identity. This is an equivalence of ringed topoi. Hence we obtain an equivalence of module categories 
$$
\Mod\mathfrak{R}\simeq\Mod\mathfrak{R}'.
$$
But $\mathfrak{R}'=\Psi_*(\mathfrak{R})=Res_{\iota}\mathfrak{R}$ is the restriction of $\mathfrak{R}$ along the inclusion functor. Thus $\mathfrak{R}'=\mathfrak{R}|_{\calD}$, and we have desired equivalences
$$
\Mod\mathfrak{R}\simeq\Mod\mathfrak{R}|_{\calD}\simeq\rMod\mathfrak{R}|_{\calD}[\calD],
$$
by Theorem 3.5.
\end{proof}

If, for each $x \in \Ob \calD$, $\mathfrak{R}(x)$ is Noetherian, then $\mathfrak{R}|_{\calD}[\calD]$ is finite-dimensional and the coherent $\mathfrak{R}|_{\calD}[\calD]$-modules are the finitely generated ones \cite{Po}. 

\begin{corollary}\label{cmshc}
Let $\mathbf{C}=(\calC,\mathcal{J})$ be a finite site and $\mathfrak{R}:\calC^{op} \to k\mbox{\rm -Alg}$ be a sheaf of $k$-algebras on $\C$. If for each $x \in \Ob \calD$, $\mathfrak{R}(x)$ is Noetherian, then there is a category equivalence 	
	$$
	\coh \text{-} \mathfrak{R} \simeq \rmod\mathfrak{R}|_{\calD}[\calD],
	$$
for some full subcategory $\calD$ uniquely determined by $\calJ$.
\end{corollary}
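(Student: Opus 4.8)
The plan is to obtain the corollary by restricting the equivalence of Theorem \ref{shc} to coherent objects on each side. First I would pin down the subcategory $\calD$. Since every Grothendieck topology on a finite category is a subcategory topology, and these are in order-reversing bijection with the strictly full Karoubian subcategories of $\calC$ (Proposition 4.1 and Proposition 4.4), the given topology can be written uniquely as $\calJ=\calJ^{\calD}$ for one such $\calD$; this is the subcategory asserted in the statement. Theorem \ref{shc} then furnishes an equivalence of abelian categories $\Theta\colon\Mod\mathfrak{R}\xrightarrow{\ \simeq\ }\rMod A$, where I abbreviate $A=\mathfrak{R}|_{\calD}[\calD]$.

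Next I would argue that coherence is preserved by this equivalence. Both $\Mod\mathfrak{R}$ and $\rMod A$ are Grothendieck abelian categories, and in any such category an object is coherent exactly when it is finitely generated and all of its finitely generated subobjects are finitely presented. Each of ``finitely generated'' and ``finitely presented'' is an intrinsic categorical condition (the latter being that $\Hom(M,-)$ commutes with filtered colimits), hence stable under any equivalence. Consequently $\Theta$ restricts to an equivalence $\coh\text{-}\mathfrak{R}\simeq\coh\text{-}A$ of the full subcategories of coherent objects, and it remains only to identify $\coh\text{-}A$ with $\rmod A$.

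The substantive step, recorded already in the paragraph preceding the statement, is that $A$ is right Noetherian, since over a right Noetherian ring the coherent modules are precisely the finitely generated ones. To prove this I would use the internal structure of the skew category algebra. Set $B=\prod_{x\in\Ob\calD}\mathfrak{R}(x)$, embedded in $A$ as the span of the elements $r\textsf{1}_x$; this is a unital subring sharing the identity $\sum_{x}1_{\mathfrak{R}(x)}\textsf{1}_x$ of $A$. Because $\calC$ is finite, so is $\calD$, hence $\Mor\calD$ is finite and $A=\bigoplus_{\textsf{f}\in\Mor\calD}\mathfrak{R}(\dom\textsf{f})\textsf{f}$ is finitely generated as a right $B$-module, with generators the $1_{\mathfrak{R}(\dom\textsf{f})}\textsf{f}$. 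As a finite product of Noetherian rings $B$ is right Noetherian, so $A$ is a Noetherian right $B$-module. Every right ideal of $A$ is in particular closed under right multiplication by $B\subseteq A$, hence is a right $B$-submodule; therefore the lattice of right ideals of $A$ embeds into that of right $B$-submodules of $A$ and inherits the ascending chain condition, showing $A$ is right Noetherian. With $A$ right Noetherian, $\coh\text{-}A=\rmod A$, and combining this with the equivalence above yields $\coh\text{-}\mathfrak{R}\simeq\rmod\mathfrak{R}|_{\calD}[\calD]$.

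I expect the delicate point to lie in the transfer of the one-sided Noetherian condition from $B$ to $A$: one must verify that the right $B$-action on each summand $\mathfrak{R}(\dom\textsf{f})\textsf{f}$ is the untwisted right regular action, so that the module-finiteness is genuine. A direct check of the multiplication rule gives $r\textsf{f}\cdot s\textsf{1}_x=(rs)\textsf{f}$ when $x=\dom\textsf{f}$ and $0$ otherwise; thus each summand is free of rank one over the corresponding factor $\mathfrak{R}(\dom\textsf{f})$ of $B$, the twisting maps $\mathfrak{R}(\textsf{f})$ entering only through left multiplication. Once this is confirmed, the module-finiteness of $A$ over $B$ and the resulting Noetherian property follow as above, and the corollary is complete. (When $k$ is a field and each $\mathfrak{R}(x)$ is finite-dimensional, $A$ is moreover finite-dimensional, recovering the phrasing of the preceding remark.)
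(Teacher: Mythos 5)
Your proposal is correct and follows essentially the same route as the paper: write $\calJ=\calJ^{\calD}$ via the classification, restrict the equivalence $\Mod\mathfrak{R}\simeq\rMod\mathfrak{R}|_{\calD}[\calD]$ of Theorem \ref{shc} to coherent objects using that coherence is an intrinsic categorical property, and then identify coherent modules over $\mathfrak{R}|_{\calD}[\calD]$ with the finitely generated ones. The only difference is one of completeness: the paper merely asserts (with a citation to Popescu) that the skew category algebra has the property that its coherent modules are the finitely generated ones, whereas you actually prove its right Noetherianity via the subring $B=\prod_{x\in\Ob\calD}\mathfrak{R}(x)$ and module-finiteness over it, which is a correct and welcome filling-in of that gap.
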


\section{Further remarks} 

From our perspective, it is important to understand the role of various finite EI-categories in finite group representations. We end this paper with several remarks, demonstrating promising connections between the present paper and some other works.

\subsection{On dense EI-sites} \label{den}
 
Finite EI-categories frequently occur in group representations and cohomology. We consider sheaves on finite EI-categories under the dense topology, as an example to illustrate methods developed earlier. Moreover, we will see that the sheafification may be used to replace the right Kan extension in Proposition 4.4(2) and Theorem 4.5. The following lemma will help us to compute the sheafifications of presheaves.

\begin{lemma} \label{minsie}  
	Let $\calC$ be a finite {\rm EI}-category and $\calJ_{\rm den}$ the dense topology. Then, for each object $x$, the minimal non-empty covering sieve $S^{\min}_x$ is given by all the morphisms from the minimal objects to $x$, namely
	$$
	S^{\min}_x=\{\textsf{f} \in {\rm Hom}_{\calC}(y,x) ~|~ y \in \Ob (\calC_{\leq x})_{\min}
	\}.
	$$ 
Subsequently $\calJ_{\rm den}=\calJ^{\calC_{\min}}$.
\end{lemma}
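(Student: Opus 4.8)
The plan is to isolate, as a preliminary observation, the single structural fact about EI-categories that drives the whole argument: \emph{minimal objects are rigid}. Precisely, if $y$ is minimal in $\calC_{\le x}$ and $\textsf{g}:z\to y$ is any morphism, then $z\cong y$ and $\textsf{g}$ is an isomorphism. I would argue this as follows: from $z\to y\to x$ we get $z\in\Ob\calC_{\le x}$ with $[z]\le[y]$, so minimality forces $[z]=[y]$; choosing an isomorphism $\phi:y\to z$, both composites $\textsf{g}\phi$ and $\phi\textsf{g}$ are endomorphisms, hence isomorphisms by the EI-hypothesis, and the usual split-mono/split-epi argument (a morphism with a left and a right inverse is invertible) shows $\textsf{g}$ itself is invertible. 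This observation is used repeatedly below.

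Granting rigidity, I would verify the three claims about $S^{\min}_x$ in turn. First, $S^{\min}_x$ is a \emph{sieve}: if $\textsf{f}:y\to x$ lies in it (so $y$ is minimal in $\calC_{\le x}$) and $\textsf{g}:z\to y$ is composable, then $\textsf{g}$ is an isomorphism and $z$ is again minimal, so $\textsf{f}\textsf{g}\in S^{\min}_x$. Second, $S^{\min}_x$ is $\calJ_{\rm den}$-\emph{covering}: given any $\textsf{f}:w\to x$, the finite subcategory $\calC_{\le w}$ (which sits inside $\calC_{\le x}$) has a minimal object $v$, and $v$ is then minimal in $\calC_{\le x}$ as well, since a strictly smaller object in $\calC_{\le x}$ would lie in $\calC_{\le w}$; picking $\textsf{g}:v\to w$ gives $\textsf{f}\textsf{g}\in S^{\min}_x$, which is exactly the density condition. (Nonemptiness of $S^{\min}_x$ follows because $\calC_{\le x}$ is finite and contains $x$, hence has a minimal object admitting a morphism to $x$.) Third, $S^{\min}_x$ is the \emph{smallest} nonempty covering sieve: if $S\in\calJ_{\rm den}(x)$ and $\textsf{f}:y\to x\in S^{\min}_x$, density supplies $\textsf{g}:z\to y$ with $\textsf{f}\textsf{g}\in S$; rigidity makes $\textsf{g}$ an isomorphism, so $\textsf{f}=(\textsf{f}\textsf{g})\textsf{g}^{-1}\in S$ because $S$ is a sieve, giving $S^{\min}_x\subseteq S$.

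For the identification $\calJ_{\rm den}=\calJ^{\calC_{\min}}$, I would first record the equality $(\calC_{\le x})_{\min}=\{y\in\Ob\calC_{\min}\mid \Hom_{\calC}(y,x)\neq\emptyset\}$: the inclusion $\supseteq$ is immediate, and if $y$ were minimal in $\calC_{\le x}$ but not globally minimal, a strictly smaller object would still map to $x$ and hence lie in $\calC_{\le x}$, a contradiction. Consequently $S^{\min}_x$ is exactly the sieve generated by all arrows from objects of $\calC_{\min}$ to $x$, which by Remark 4.3 is the minimal covering sieve of $\calJ^{\calC_{\min}}(x)$. Since covering sieves of any Grothendieck topology are upward closed (a direct consequence of the axioms in Definition \ref{grotop}), both $\calJ_{\rm den}(x)$ and $\calJ^{\calC_{\min}}(x)$ consist of precisely the sieves on $x$ that contain $S^{\min}_x$; as this holds for every $x$, the two topologies coincide.

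I expect the main obstacle to be conceptual rather than computational: getting the rigidity observation and the equality $(\calC_{\le x})_{\min}=\{y\in\Ob\calC_{\min}\mid\Hom_{\calC}(y,x)\neq\emptyset\}$ stated and applied cleanly, after which the sieve-theoretic steps and the appeal to upward closure are routine. A secondary point to handle carefully is that the dense topology admits no empty covering sieve, which is what makes ``minimal \emph{nonempty} covering sieve'' the correct notion and lets the minimality argument in the second paragraph go through.
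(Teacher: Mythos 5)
Your proof is correct and follows essentially the same three-step verification (sieve, covering, minimality via the EI-property) as the paper, with your preliminary ``rigidity'' observation simply packaging, in a cleaner and more precise form, the inline arguments the paper uses (the paper shows $\textsf{gh}\in\Aut_{\calC}(y)$ rather than that $\textsf{g}$ itself is invertible, but the idea is identical). One genuine improvement on your side: you actually prove the concluding identification $\calJ_{\rm den}=\calJ^{\calC_{\min}}$ --- via the equality $(\calC_{\le x})_{\min}=\{y\in\Ob\calC_{\min}\mid\Hom_{\calC}(y,x)\neq\emptyset\}$, Remark 4.3, and upward closure of covering sieves --- whereas the paper's proof stops after the minimality step and leaves that ``subsequently'' claim unjustified.
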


\begin{proof}
	Firstly, we verify that $S_x$ is a sieve. For any morphism $\textsf{f}: y \to x \in S_x$ and any morphism $\textsf{g}: z \to y$ with ${\rm cod}(\textsf{g})={\rm dom}(\textsf{f})$, we have to show that $\textsf{fg} \in S_x$. Note that $z \leq y$ and $y$ is minimal, we infer that $z \in \Ob (\calC_{\leq x})_{\min}$. It follows that $\textsf{fg} \in S_x$.

	Secondly, we shall show that $S_x$ is a covering sieve. For any $\textsf{f}:y \to x \in {\rm Hom}_{\calC}(-,x)$, we have to prove that there exists a morphism $\textsf{g}: z \to y$ such that $\textsf{fg} \in S_x$. Since $\calC$ is finite, we can always find such a morphism $\textsf{g}: z \to y$.
	
	Finally, we will show that $S_x$ is minimal in $\calJ_{\rm den}(x)$. That is, we have to check that $S_x \subseteq T$ for any $T \in \calJ_{\rm den}(x)$. For any $\textsf{f}: y \to x \in S_x$, by definition, $y \in \Ob (\calC_{\leq x})_{\min}$. Since $T \in \calJ_{\rm den}(x)$, according to the definition of the dense topology, there exists a morphism $\textsf{g}: z \to y$ such that $\textsf{fg}:z \to x \in T$. Now $y$ is minimal and clearly $[z] \leq [y]$, it follows that $z \cong y$. Thus there exists a morphism $\textsf{h}: y \to z$. It follows immediately that $\textsf{fgh}: y \to x \in T$ as $T$ is a sieve. Evidently, $\textsf{gh} \in {\rm Aut}_{\calC}(y)$, therefore $\textsf{f}=\textsf{fgh}(\textsf{gh})^{-1} \in T$. This proves that $S_x \subseteq T$. The non-emptiness of sieves follows from the definition of the dense topology.
\end{proof}

Equivalently, the unique minimal covering sieve of $\calJ_{\rm den}(x)$ can be characterized as a functor:
\begin{eqnarray}
S_x^{\rm min}(y)=
\begin{cases}
\text{Hom}_{\calC}(y,x),       & \text{if} ~y \in \Ob (\calC_{\leq x})_{\min}; \notag \\
\emptyset, & {\rm otherwise}.
\end{cases}
\end{eqnarray}

\begin{proposition} \label{sheafification}
Let $\mathbf{C}=(\calC,\mathcal{J}_{\rm den})$ be the dense site on a finite {\rm EI}-category. Then, for each presheaf $\mathfrak{F}: \calC^{op} \rightarrow {\rm Set}$ and each $x \in {\rm Ob}\calC$, the half-sheafification of $\mathfrak{F}$ can be obtained as follows (with obvious maps)
$$
\mathfrak{F}^\dag(x) \cong \underset{[y] \subset \Ob (\calC_{\leq x})_{\min}} \prod \underset{i} \biguplus ~~ \mathfrak{F}(y)^{H_i}.
$$ 
where $H_i$'s are the stabilizers of the transitive $\Aut_{\calC}(y)$-sets in the morphism set
$$
{\rm Hom}_{\calC}(y,x)= \underset{i} \biguplus ~~H_i  \backslash
\text{\rm Aut}_{\calC}(y).
$$
Moreover, $\mathfrak{F}^a \cong \mathfrak{F}^\dag$. Consequently, $\mathfrak{F}^\dag$ is a sheaf.

Let $\iota : \calC_{\min}\hookrightarrow\calC$ be the inclusion. Any presheaf $\frakG$ on $\calC_{\min}$ can be naturally extended to a presheaf $\frakG'$ on $\calC$ satisfying $(\frakG')^a\cong RK_{\iota}\frakG$.
\end{proposition}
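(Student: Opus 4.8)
The plan is to take the natural extension to be the left Kan extension $\frakG' := LK_{\iota}\frakG$ along $\iota : \calC_{\min}\hookrightarrow\calC$, and then to identify its sheafification with $RK_{\iota}\frakG$ by a short formal argument. First I would record that, since $\iota$ is the inclusion of a full subcategory and hence fully faithful, the unit $\frakG\to Res_{\iota}LK_{\iota}\frakG$ is an isomorphism, so $\frakG'$ genuinely restricts to $\frakG$ on $\calC_{\min}$; moreover $LK_{\iota}$ is a functor, so the assignment $\frakG\mapsto\frakG'$ is natural. This accounts for the phrase ``naturally extended'' and reduces the claim to the isomorphism $(\frakG')^{a}\cong RK_{\iota}\frakG$.

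For the main identification I would argue purely by adjunctions. At the level of presheaves one has the adjoint triple $LK_{\iota}\dashv Res_{\iota}\dashv RK_{\iota}$, and the sheafification $(-)^{a}$ is left adjoint to the inclusion $\Sh(\calC,\calJ_{\rm den})\hookrightarrow\PSh(\calC)$. Composing these two adjunctions shows that $(-)^{a}\circ LK_{\iota}$ is left adjoint to the functor $Res_{\iota}:\Sh(\calC,\calJ_{\rm den})\to\PSh(\calC_{\min})$. By the preceding lemma we have $\calJ_{\rm den}=\calJ^{\calC_{\min}}$, so Proposition 4.4(2) together with the Comparison Lemma tells us that this $Res_{\iota}$ is an equivalence whose quasi-inverse is $RK_{\iota}$. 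An equivalence has its quasi-inverse as a two-sided adjoint, so its left adjoint is canonically isomorphic to $RK_{\iota}$; by uniqueness of adjoints $(-)^{a}\circ LK_{\iota}\cong RK_{\iota}$, and evaluating at $\frakG$ gives $(\frakG')^{a}\cong RK_{\iota}\frakG$.

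I would then add a second, more hands-on verification that also explains why in fact \emph{any} natural extension of $\frakG$ serves the purpose, and which makes direct contact with the explicit formula of this proposition. Because the minimal covering sieve $S_{x}^{\min}$ is supported entirely on the minimal objects, the identification $\frakF^{a}\cong\frakF^{\dag}$ established above shows that $\frakF^{a}(x)\cong\Nat(S_{x}^{\min},\frakF)$ depends only on the restriction $Res_{\iota}\frakF$; consequently sheafification factors, up to isomorphism, through $Res_{\iota}$. Since $RK_{\iota}\frakG$ is already a sheaf with $Res_{\iota}RK_{\iota}\frakG\cong\frakG$, applying this factorization to any extension $\frakG'$ of $\frakG$ yields $(\frakG')^{a}\cong(RK_{\iota}\frakG)^{a}=RK_{\iota}\frakG$.

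The one place that needs genuine care, rather than pure formalism, is this last factorization: I must check that not merely the values $\frakF^{\dag}(x)$ but also the transition maps $\frakF^{\dag}(\textsf{f})$ depend solely on $Res_{\iota}\frakF$. This reduces to observing that for any morphism $\textsf{f}$ the pullback sieve $\textsf{f}^{*}S_{x}^{\min}$ is again supported on minimal objects, since $\textsf{f}\textsf{g}\in S_{x}^{\min}$ forces $\dom(\textsf{g})$ to be minimal, so the restriction maps computing $\frakF^{\dag}(\textsf{f})$ only ever evaluate $\frakF$ at minimal objects. I expect this bookkeeping to be the main obstacle; everything else is a routine unwinding of the adjunctions above.
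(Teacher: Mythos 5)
Your proposal, as written, proves only the \emph{last} claim of the proposition --- the extension statement $(\frakG')^{a}\cong RK_{\iota}\frakG$ --- and nowhere establishes the first two claims, which are the proposition's main content. The explicit formula for $\frakF^{\dag}(x)$ requires knowing (via Lemma \ref{minsie}) that $\calJ_{\rm den}(x)$ has a minimum covering sieve $S^{\min}_x$ supported on minimal objects, so that the (co)limit defining $\frakF^{\dag}(x)$ collapses to $\Nat(S^{\min}_x,\frakF)$, followed by decomposing $\Hom_{\calC}(y,x)$ into $\Aut_{\calC}(y)$-orbits to get the fixed-point description; and the idempotency $\frakF^{a}\cong\frakF^{\dag}$ is obtained in the paper by re-running this computation for $(\frakF^{\dag})^{\dag}$, using that $\frakF^{\dag}$ agrees with $\frakF$ on minimal objects. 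None of this appears in your text, yet your second, ``hands-on'' argument explicitly invokes ``the identification $\frakF^{a}\cong\frakF^{\dag}$ established above'' --- nothing above it in your proposal establishes this, so that argument is circular as a proof of the full proposition. The genuine care point is therefore not the transition-map bookkeeping you single out at the end (which is indeed needed, and your observation that $\textsf{f}^{*}S^{\min}_{x}$ is supported on minimal objects handles it correctly), but the unproven computation on which your whole factorization rests: without $\frakF^{a}\cong\frakF^{\dag}$ you do not know that sheafification is computed by a single application of $\dag$, which is exactly what makes everything ``depend only on $Res_{\iota}\frakF$.''

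What you do prove, you prove correctly, and by a route genuinely different from the paper's. Taking $\frakG'=LK_{\iota}\frakG$ agrees with the paper's explicit construction (extension by $\emptyset$ on non-minimal objects: in an EI-category the relevant comma categories over non-minimal objects are empty, and full faithfulness of $\iota$ gives $Res_{\iota}LK_{\iota}\frakG\cong\frakG$), and your uniqueness-of-adjoints argument --- $(-)^{a}\circ LK_{\iota}$ is left adjoint to $Res_{\iota}$ on sheaves, which by Lemma \ref{minsie}, Proposition 4.4(2) and the Comparison Lemma is an equivalence with quasi-inverse $RK_{\iota}$, whence $(-)^{a}\circ LK_{\iota}\cong RK_{\iota}$ --- is cleaner than the paper's, which instead observes that $(\frakG')^{a}$ and $RK_{\iota}\frakG$ are two sheaves on $(\calC,\calJ_{\rm den})$ agreeing on $\calC_{\min}$ and applies the equivalence of Theorem \ref{shc}. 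To turn your proposal into a complete proof, prepend the direct computation of $\frakF^{\dag}$ via the minimal covering sieve and the verification that $(\frakF^{\dag})^{\dag}\cong\frakF^{\dag}$; your formal adjunction argument would then stand entirely on its own, and your second argument would become legitimate rather than circular.
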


\begin{proof} We proceed to prove this proposition by direct computation. Given a presheaf $\mathfrak{F}$, let us first compute its half-sheafification. On an object $x \in {\rm Ob}\calC$,
	
\begin{eqnarray*}
		\mathfrak{F}^\dag(x) &=& \lim_{S \in \mathcal{J}(x)}  \text{Nat}(S, \mathfrak{F})    \\
		&\cong& \text{Nat}(S_x^{\rm min}, \mathfrak{F})      \\
		&\cong& \underset{[y] \subset \Ob (\calC_{\leq x})_{\min}} \prod \text{Hom}(S_x^{\rm min}(y), \mathfrak{F}(y))  \\
		& \cong& \underset{[y] \subset \Ob (\calC_{\leq x})_{\min}} \prod \text{Hom}_{\text{Aut}_{\calC}(y)}(\text{Hom}_{\calC}(y,x), \mathfrak{F}(y)) \\		
		&\cong& \underset{[y] \subset \Ob (\calC_{\leq x})_{\min}} \prod \text{Hom}_{\text{Aut}_{\calC}(y)}\left(\underset{i} \biguplus ~~H_i  \backslash
		\text{Aut}_{\calC}(y), \mathfrak{F}(y)\right) \\
		&\cong& \underset{[y] \subset \Ob (\calC_{\leq x})_{\min}} \prod \underset{i} \biguplus ~~\mathfrak{F}(y)^{H_i}.
\end{eqnarray*}
	
Here $[y]$ is the isomorphism class of the object $y$ in $\calC$. The third isomorphism above follows from the characterization of the minimal sieve. Particularly if $y \in \Ob \calC_{\min}$, then $S_y^{\rm min}(y)=\text{Aut}_{\calC}(y)$. Thus 
$$
\mathfrak{F}^\dag(y) \cong \text{Hom}_{\text{Aut}_{\calC}(y)}(\text{Aut}_{\calC}(y), \mathfrak{F}(y)) \cong \mathfrak{F}(y).
$$

For a morphism $\textsf{f} : w \to x$, one can see that $\frakF^\dagger(\textsf{f}):\frakF^\dagger(x)\to\frakF^\dagger(w)$ is given by the restriction on fixed points, because $\textsf{f}$ induces an $\Aut_{\calC}(y)$-map 
$$
\Hom_{\calC}(y,w)\to\Hom_{\calC}(y,x)
$$ 
and subsequently an injective group homomorphism between their stabilizers in $\Aut_{\calC}(y)$.

Repeating the above steps, we get the sheafification of $\mathfrak{F}$:
	
\begin{eqnarray*}
		\mathfrak{F}^a(x)&=&(\mathfrak{F}^\dag)^\dag(x) \\
		&\cong& \text{Nat}(S_x^{\rm min}, \mathfrak{F}^\dag)      \\
		&\cong& \underset{[y] \subset \Ob (\calC_{\leq x})_{\min}} \prod \text{Hom}_{\text{Aut}_{\calC}(y)}(S_x^{\rm min}(y), \mathfrak{F}^\dag(y)) \\
		&\cong& \underset{[y] \subset \Ob (\calC_{\leq x})_{\min}} \prod \text{Hom}_{\text{Aut}_{\calC}(y)}(S_x^{\rm min}(y), \mathfrak{F}(y)) \\
		&\cong& \mathfrak{F}^\dag(x).
\end{eqnarray*}
	
One readily checks that $\mathfrak{F}^\dag$ and $\mathfrak{F}^a$ coincide on morphisms as well. Therefore $\mathfrak{F}^a \cong \mathfrak{F}^\dag$, it follows that $\mathfrak{F}^\dag$ is a sheaf.

In the end, given a presheaf $\frakG$ on $\calC_{\min}$, one can define a presheaf $\frakG'$ on $\calC$ by asking $\frakG'(y)=\frakG(y)$ if $y$ is minimal, and $\frakG'(y)=\emptyset$ otherwise. Now by our previous discussions, both $(\frakG')^a$ and $RK_{\iota}\frakG$ are sheaves on $\C_{\rm den}=(\calC,\calJ_{\rm den})$. By Theorem 4.5, we must have $(\frakG')^a\cong RK_{\iota}\frakG$ because they agree on $\calC_{\min}$.
\end{proof}

\begin{remark} Let $\mathbf{C}=(\calC,\mathcal{J}_{\rm den})$ be the dense site on a finite {\rm EI}-category. 
\begin{enumerate} 
\item We emphasize that the last statement of Proposition 5.2 works for an arbitrary co-ideal $\calD$ of $\calC$. For such a strictly full subcategory, any presheaf on $\calD$ can be extended to a presheaf on $\calC$, as in the above proof. Therefore, the right Kan extension may be computed by the sheafification, which provide another way to compare two sheaf categories.

\item From Proposition 5.2, one can quickly deduce the following equivalence
$$
\Sh(\C_{\rm den}) \simeq \underset{[y] \subset \Ob \calC_{\min}} \prod \rSet{\rm Aut}_{\calC}(y),
$$
which matches with Proposition 4.4(2). Let $\mathfrak{R}$ be a sheaf of $k$-algebras on $\C_{\rm den}$. Passing to the module categories, we obtain an equivalence $\Mod\mathfrak{R} \simeq  \prod_{[y] \subset \Ob \calC_{\min}} \rMod \mathfrak{R}(y)[\Aut_{\calC}(y)]\simeq \rMod \prod_{[y] \subset \Ob \calC_{\min}} \mathfrak{R}(y)[\Aut_{\calC}(y)]$.
\end{enumerate} 
\end{remark}

The preceding result provides some kind of ``block decomposition'' of $\Mod \mathfrak{R}$. 

\begin{example}	Let $G$ be a finite group and $p$ be a prime number such that $p \bigm{|} |G|$. Consider $\mathbf{O}_p^\circ(G)=(\calO_p^\circ(G),\mathcal{J}_{\rm den})$ (see Section 2.4). Let $\underline{k}$ be the constant sheaf given by $k$. Then
$$ 
\Sh(\mathbf{O}_p^\circ(G),k)=\Mod\underline{k} \simeq \prod_P \rMod k(N_G(P)/P),
$$
where $P$ runs over the set of conjugacy classes of order $p$-subgroups.
\end{example}

\subsection{Vertices and sources} In \cite{Xu}, given a finite EI-category $\calC$, one can introduce a vertex (which is a unique full subcategory of $\calC$) and a source for each finite-dimensional indecomposable $k\calC$-module (covariant functor from $\calC$ to $\rMod k$) $\frakM$. Let $\frakN$ be a presheaf on $\calC$. In light of sheaf theory, there is a (unique) finest topology $\calJ_{\frakN}$, for which $\frakN: \calC^{op} \to \rMod k$ becomes a sheaf on $(\calC,\calJ_{\frakN})$. Now we know that $\calJ_{\frakN}=\calJ^{\calD}$ for a unique strictly full subcategory $\calD$. It is interesting to compare $\calD$ with the vertex of $\frakN$ (when it is indecomposable), because they seem to be closely related. Plus, the construction of $\calD$ for an indecomposable module works for any finite category, not just the finite EI-categories. Moreover it is defined for arbitrary modules, even the infinite-dimensional ones.

\subsection{Sipp-topology on $G$-Set} There is a sipp-topology introduced on $G$-Set by Balmer \cite{Ba}. It may be restricted to the finite EI-category $\calO(G)$, the orbit category of $G$. One can verify that the sipp-topology is a subcategory topology on $\calO(G)$, given by the strictly full subcategory $\calO_p(G)$, the $p$-orbit category (see Section 2.4). As a consequence $\Sh(G\mbox{-Set},\calJ_{\rm sipp})\simeq\Sh(\calO(G),\calJ_{\rm sipp})\simeq\PSh(\calO_p(G))$. Since every strictly full subcategory of $\calO(G)$ defines a topology, it would be interesting to learn how it can be used to understand group representations.

\subsection{Hochschild cohomology of $\Mod\mathfrak{R}$} Let $\C$ be a finite site and $\calJ^{\calD}$ be the subcategory topology. Suppose $\mathfrak{R}$ is a sheaf of $k$-algebras on $\C$. Since $\mathfrak{R}|_{\calD}[\calD]$ is an associative algebra, one can consider its Hochschild cohomology. Due to the category equivalence we established earlier, Hochschild cohomology theory for $\mathfrak{R}|_{\calD}[\calD]$ can be considered as one for the Abelian category $\Mod\mathfrak{R}$ on the ringed site $(\C,\mathfrak{R})$, in which $\C=(\calC,\calJ^{\calD})$. In light of Remark 3.6, it is curious to know weather this coincides with the one introduced by Lowen and Van den Bergh \cite{LB} for Abelian categories, where they had module categories over ringed spaces in mind.

\end{document}